\documentclass{amsart}
\usepackage{hyperref,longtable}
\usepackage{amsthm,amsmath,amssymb,tikz}
\usepackage{fancyhdr}
\def\checkmark{\tikz\fill[scale=0.4](0,.35) -- (.25,0) -- (1,.7) -- (.25,.15) -- cycle;} 
\usepackage[outer=1.5in, inner=1.25in, top=1.25in, bottom=1.25in]{geometry}
\usepackage[shortlabels]{enumitem}
\numberwithin{equation}{section}
\usepackage[english]{babel}

\subjclass[2010]{05C25, 20B25}
\keywords{Valency 3, Vertex-transitive, Semiregular}

\begin{document}
	
	\newtheorem{thm}{Theorem}[section]
	\newtheorem{lem}[thm]{Lemma}
	\newtheorem{cor}[thm]{Corollary}
	\theoremstyle{definition}
	\newtheorem{hyp}[thm]{Hypothesis}
	\newtheorem{de}[thm]{Definition}
	\newtheorem{remark}[thm]{Remark}

	\title[Semiregular automorphisms]{On the order of semiregular automorphisms of cubic vertex-transitive graphs}
	
	\author[Barbieri]{Marco Barbieri}
	\address{Dipartimento di Matematica``Felice Casorati", University of Pavia, Via Ferrata 5, 27100 Pavia, Italy} 
	\email{marco.barbieri07@universitadipavia.it}
	
	\author[Grazian]{Valentina Grazian}
	\address{Dipartimento di Matematica e Applicazioni, University of Milano-Bicocca, Via Cozzi 55, 20125 Milano, Italy} 
	\email{valentina.grazian@unimib.it}
	
	\author[Spiga]{Pablo Spiga}
	\address{Dipartimento di Matematica e Applicazioni, University of Milano-Bicocca, Via Cozzi 55, 20125 Milano, Italy} 
	\email{pablo.spiga@unimib.it}
	
	\begin{abstract}
		We prove that, if $\Gamma$ is a finite connected cubic vertex-transitive graph, then either there exists a semiregular automorphism of $\Gamma$ of order at least $6$, or the number of vertices of $\Gamma$ is bounded above by an absolute constant.
	\end{abstract}	
	
	\maketitle	
	\section{Introduction}
	
	A fascinating old-standing question in the theory of group actions on graphs is the so-called \emph{Polycirculant Conjecture}: non-identity $2$-closed transitive permutation groups contain non-identity semiregular elements. This formulation of the conjecture was introduced by Klin \cite{Klin1998}. However, the question was previously posed independently by Maru\v{s}i\v{c} \cite[Problem~2.4]{Marusic1981} and Jordan \cite{Jordan1988} in terms of graphs: vertex-transitive graphs having more than one vertex admit non-identity semiregular automorphisms.
 
	In this paper, we focus our attention on cubic graphs. In \cite{MarusicScappellatoSemiregular}, Marusi\v{c} and Scappellato proved that, each cubic vertex-transitive graph admits a non-identity semiregular automorphism, settling the Polycirculant Conjecture for such graphs. Their proof did not take into account the order of the semiregular elements. In this direction, Cameron \emph{et al.} proved in \cite{CameronSheehanSpigaSemiregular} that, if $\Gamma$ is a cubic vertex-transitive graph, then $\mathrm{Aut}(\Gamma)$ contains a semiregular automorphism of order at least $4$. They also conjectured that, as the number of vertices of $\Gamma$ tends to infinity, the maximal order of a semiregular automorphism tends to infinity. This was proven false by the third author in \cite{SpigaSemiregularBurnside} by building a family of cubic vertex-transitive graphs where such a maximum is precisely $6$. In the light of these results, it is unclear whether $6$ is  optimal in the sense of minimizing the maximal order of a semiregular element. Broadly speaking, we are interested in 
	
\begin{align}\label{eq:eq1}\liminf_{\substack{|V\Gamma|\to \infty\\\Gamma \textrm{ cubic vertex-transitive}}}\max\{o(g)\mid g\in \mathrm{Aut}(\Gamma), g \textrm{  semiregular}\},
\end{align}
where we denote by $o(g)$ the order of the group element $g$.

	\begin{thm}\label{thm.main:1.1}
		The value of~\eqref{eq:eq1} is $6$.
	\end{thm}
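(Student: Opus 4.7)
The upper bound $\liminf\le 6$ is immediate from the third author's construction in \cite{SpigaSemiregularBurnside}: that paper exhibits an infinite family of cubic vertex-transitive graphs in which the largest order of a semiregular automorphism is exactly $6$. All the substance therefore lies in the matching lower bound, which is the stronger statement announced in the abstract: there exists an absolute constant $N$ such that every finite connected cubic vertex-transitive graph $\Gamma$ with $|V\Gamma|>N$ has a semiregular automorphism of order at least $6$. I would argue the contrapositive, assuming that every semiregular element of $G:=\mathrm{Aut}(\Gamma)$ has order in $\{1,2,3,4,5\}$, and deriving a bound on $|V\Gamma|$.

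The first step is to control the local structure. Writing $L=G_v^{\Gamma(v)}\le S_3$ for the local group at a vertex $v$, one examines each of the finitely many possibilities for $L$. In the arc-transitive cases Tutte's classical theorem gives $|G_v|\le 48$, while in the remaining cases either $G$ is regular on vertices (so $|G_v|=1$) or $L\cong \mathbb{Z}_2$, in which case results on cubic vertex-transitive graphs still bound $|G_v|$ by an absolute constant. In every case only the primes $2$ and $3$ divide $|G_v|$, and hence \emph{every element of $G$ of prime order $p\ge 5$ is fixed-point-free and therefore semiregular}. Under our standing hypothesis this instantly forbids elements of prime order $p\ge 7$ and forces any $5$-element to have exact order $5$. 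Parallel considerations bound the exponent of the Sylow $2$- and $3$-subgroups modulo their intersection with $G_v$.

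With the exponent of $G$ thus controlled, the next step is to analyze a minimal normal subgroup $N$ of $G$. If $N$ is elementary abelian, then the orbits of $N$ on $V\Gamma$ partition the vertex set into blocks of equal size and give rise to a non-identity semiregular element of $N$ whose order, by hypothesis, lies in $\{2,3,5\}$. Combining such an element with a centralising semiregular element of complementary order (extracted either from the element of order at least $4$ guaranteed by \cite{CameronSheehanSpigaSemiregular} or from a suitable quotient) should produce a semiregular element of order $6$, $10$, or $15$, contradicting the hypothesis unless $|V\Gamma|$ is small. If instead $N$ is a direct product of isomorphic non-abelian simple factors, then each factor is a simple $\{2,3,5\}$-group, hence belongs to the short Dickson list $\{A_5,A_6,\mathrm{PSU}_4(2)\}$, and each candidate can be disposed of by a direct case-by-case inspection. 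Passing to the quotient graph $\Gamma/N$ and inducting on $|V\Gamma|$ then reduces the analysis to a bounded tower of extensions.

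I expect the main obstacle to be the passage from two commuting semiregular elements of coprime orders to a single semiregular element of their product order: this requires verifying that the fixed-point set of one element does not intersect the cycles of the other in a destructive way, which in turn demands precise information on centralizers of semiregular elements and on how orbit structures interact across normal subgroups. This is also where the cubic valency enters essentially, through the sharp bound on $|G_v|$, rather than via a soft counting argument. The resulting constant $N$ should be effective, but I would not attempt to optimize it within this plan.
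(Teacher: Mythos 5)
Your overall architecture is the same as the paper's: the upper bound is just the citation of \cite{SpigaSemiregularBurnside}, and the lower bound is precisely the content of Theorem~\ref{thm.main:1.2} (only finitely many exceptional pairs). The problem is that your sketch of the lower bound breaks at its first load-bearing step. When the local group is $\mathbb{Z}_2$, the vertex stabilizer of a cubic vertex-transitive graph is \emph{not} bounded by an absolute constant: in the (split) Praeger--Xu graphs the stabilizer is an elementary abelian $2$-group whose rank grows with the parameters (compare Lemma~\ref{lemma:auxiliary}, where $|G_\alpha|$ can be as large as $2^{r/2}$). This unbounded-stabilizer situation, which occurs exactly when the quotient by a minimal normal subgroup is a cycle, is where most of the paper's effort goes (Praeger--Xu graphs, the splitting/merging operators, ladders, the amalgam analysis), and your plan has no tool for it. In particular the step ``combine a semiregular element of $N$ with a centralising semiregular element of complementary order'' cannot be carried out there: the element of order at least $4$ guaranteed by \cite{CameronSheehanSpigaSemiregular} is in general a $2$-element, need not commute with anything in $N$, and need not have order coprime to $|N|$, so the analogue of Lemma~\ref{lem:2.2} simply does not apply; ``should produce'' is exactly the point at issue.

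The second gap is the regular case $G_v=1$, which you treat as harmless. There $\Gamma$ is a Cayley graph, the hypothesis forces every element of $G$ to have order at most $5$, and bounding $|V\Gamma|=|G|$ requires the restricted Burnside problem (Zelmanov) for groups on at most three generators together with the classifications of groups with prescribed spectrum; the same deep input (a $2$-generated group of exponent $5$ has order at most $5^{34}$) is what caps the residual family with $2\cdot 5^{\ell}$ vertices in Table~\ref{table:table}. Your proposal never invokes any of this, and ``a bounded tower of extensions'' also conceals the substantial computational component (the census of cubic vertex-transitive graphs up to $1\,280$ vertices, the simple-group cases, and the cohomological construction of covers) on which the paper's proof of Theorem~\ref{thm.main:1.2}, and hence of Theorem~\ref{thm.main:1.1}, actually rests. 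So while your reduction of Theorem~\ref{thm.main:1.1} to ``upper bound by construction, lower bound by a finiteness theorem'' is the right frame, the proposed proof of the finiteness theorem would not go through as written.
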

	
	Theorem \ref{thm.main:1.1} is a consequence of the following result and the main result in~\cite{SpigaSemiregularBurnside}.
	\begin{thm}\label{thm.main:1.2}
		Let $(\Gamma,G)$ be a pair such that $\Gamma$ is a connected cubic graph and $G$ is
	 a subgroup of the automorphism group of $\Gamma$ acting vertex-transitively on $V\Gamma$. Then either $G$ contains a semiregular automorphism of order at least $6$ or the pair $(\Gamma,G)$ appears in Table~$\ref{table:table}$.
	\end{thm}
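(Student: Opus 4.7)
My plan is to adapt and strengthen the strategy used by Cameron, Sheehan and the third author for their order-$4$ result in~\cite{CameronSheehanSpigaSemiregular}. The starting input is that in a cubic vertex-transitive pair $(\Gamma,G)$ the order of a vertex stabilizer $G_v$ is absolutely bounded: this follows from the local classification in valency $3$, combining Tutte's theorem in the arc-transitive subcase with the finite list of possible local groups in the general vertex-transitive case. In particular, once $|V\Gamma|$ is large, so is $|G|$, and Sylow-theoretic tools become available.

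The first reduction handles prime divisors of $|G|$ that exceed $3$. If $p\ge 5$ divides $|G|$ but not $|G_v|$, then each Sylow $p$-subgroup of $G$ acts fixed-point-freely on $V\Gamma$, and its center contains a semiregular element of order $p$. For $p\ge 7$ this immediately gives the conclusion. When $p=5$ is the largest prime, one combines a semiregular $5$-element with a semiregular $2$- or $3$-element produced by~\cite{CameronSheehanSpigaSemiregular,MarusicScappellatoSemiregular}; after intersecting with a suitable centralizer, one obtains commuting semiregular elements of coprime order, whose product is semiregular of order at least $10$.

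The substantive case is when $|G|$ is a $\{2,3\}$-number. Here the plan is to pick a suitable minimal normal subgroup $N$ of $G$ and pass to the quotient action on the orbit graph $\Gamma/N$: either $N$ contains a semiregular element of order at least $6$, or the pair $(\Gamma/N, G/N)$ is a smaller vertex-transitive pair on which induction applies, and a semiregular element lifts to $G$. Care is required because $\Gamma/N$ can have valency less than $3$, because the quotient can fail to be simple, and because the quotient vertex stabilizer may acquire fixed points; each situation demands a separate lemma, paralleling the structure of~\cite{CameronSheehanSpigaSemiregular}.

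The main obstacle is the residual case of a $\{2,3\}$-group $G$ whose centralizing structure is so restricted by the cubic local action that no commuting pair of semiregular $2$- and $3$-elements is available. The graphs of~\cite{SpigaSemiregularBurnside}, with maximal semiregular order exactly $6$, sit on this boundary; the exceptional entries of Table~\ref{table:table} should be cubic vertex-transitive pairs with $|G|$ a $\{2,3\}$-number of small order. To finish, I would bound $|G|$ in these cases by analyzing the derived series of $G$ together with the orbit structure of its Sylow $2$- and $3$-subgroups, and then verify the finite residue against the existing census of small cubic vertex-transitive graphs.
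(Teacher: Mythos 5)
Your opening premise is false, and it is precisely the point where the real difficulty of the theorem lives. In a cubic \emph{vertex}-transitive pair $(\Gamma,G)$ the vertex stabilizer is absolutely bounded only in the arc-transitive subcase (Tutte); when the local action of $G_\alpha$ on the neighbourhood has order $2$, the stabilizer can be an elementary abelian $2$-group of unbounded rank, as happens for the split Praeger-Xu graphs $\mathrm{sC}(r,s)$, where $|G_\alpha|$ grows with $r$. So ``once $|V\Gamma|$ is large, so is $|G|$'' in a way you can exploit Sylow-theoretically is not available as a starting input, and the case your plan never engages --- a large $2$-group stabilizer with $\Gamma/N$ a cycle --- is the one the paper must treat with the merging/splitting machinery, the classification of cycle decompositions of Praeger-Xu graphs, and a module-theoretic analysis of the action of $G_\alpha$ and edge stabilizers on an abelian minimal normal subgroup. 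Your quotient-and-induct outline is compatible in spirit with the paper's normal quotient strategy, but without this cycle-quotient analysis (and without handling the regular case $G_\alpha=1$, where the paper needs the classification of groups of given small spectrum together with Zelmanov's solution of the restricted Burnside problem) the induction does not close.

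The second genuine gap is your disposal of the prime $5$. You claim that when $5$ divides $|G|$ one can ``intersect with a suitable centralizer'' to get commuting semiregular elements of coprime orders and hence a semiregular element of order at least $10$. No argument is given that such a commuting pair exists, and it does not in general: Table~\ref{table:table} contains, among others, the families with $|V\Gamma|=2\cdot 5^{\ell}$ and $|G|=4\cdot 5^{\ell}$, where $G=P\rtimes\langle x\rangle$ with $P$ of exponent $5$ and $x$ of order $4$ whose square lies in a vertex stabilizer; there every semiregular element has order exactly $5$ (or less), so no element of order $\ge 6$, let alone $10$, can be produced. Consequently your residual case ``$|G|$ a $\{2,3\}$-number'' does not capture the exceptional pairs: bounding the genuinely exceptional $\{2,5\}$-examples requires showing the relevant Sylow $5$-subgroup is $2$-generated of exponent $5$ and invoking the restricted Burnside bound $5^{34}$, which is a different mechanism from anything in your sketch. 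The concluding step ``verify the finite residue against the existing census'' also cannot work as stated, since the census only reaches $1\,280$ vertices while the exceptional list extends far beyond it.
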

	
There is a considerable amount of work into the proof of Theorem~\ref{thm.main:1.2}. Broadly speaking, the proof divides into two main cases. In the first main case, the exponent of the group $G$ is very small, bounded above by $5$, and we use explicit knowledge on the finite groups having exponent at most $5$. The second main case is concerned with graphs admitting a normal quotient which is a cycle. Here, we need to refine our knowledge on the ubiquitous Praeger-Xu graphs and on the splitting and merging operators between cubic vertex-transitive graphs and $4$-valent arc-transitive graphs defined in~\cite{PSV.cubicCensus}.

\begin{remark}\label{rem}{\rm
		The veracity of Theorem~\ref{thm.main:1.2} for graphs with at most $1\,280$ vertices has been proven computationally using the database of small cubic vertex-transitive graphs  in~\cite{PSV.cubicCensus}. Therefore, in the course of the proof of Theorem~\ref{thm.main:1.2} whenever we reduce to a graph having at most $1\,280$ vertices we simply refer to this computation.}
\end{remark}
Table~\ref{table:table} consists of six columns. In the first column, we report the number of vertices of the exceptional cubic vertex-transitive graph $\Gamma$. In the second column, we report the order of the transitive subgroups $G$ of $\mathrm{Aut}(\Gamma)$ with $G$ not containing semiregular elements of order at least $6$: each subgroup is reported up to $\mathrm{Aut}(\Gamma)$-conjugacy class. In the third column, we report the cardinality of $\mathrm{Aut}(\Gamma)$. In the forth column, when $|\mathrm{V}\Gamma|\le 1\,280$, we report the number of the graph in the database of small cubic vertex-transitive graphs in~\cite{PSV.cubicCensus}. In the fifth column of Table~\ref{table:table}, we write the symbol $\checkmark$ when the graph is arc-transitive and the symbol $\dag$ when the graph is a split Praeger-Xu graph (see Section~\ref{sPX} for the definition of split Praeger-Xu graphs). Split Praeger-Xu graphs play an important role in our investigation and hence we are keeping track of this information in the forth column.
In the sixth column, for the graphs not appearing in the database of small cubic vertex-transitive graphs, we give as much information as possible.
	
\begin{longtable}{| p{.06\textwidth} | p{.24\textwidth} | p{.085\textwidth} |p{.05\textwidth}|p{.06\textwidth}|p{.355\textwidth}|} 
\hline
$|\mathrm{V}\Gamma|$&  $|G|$& $|\mathrm{Aut}(\Gamma)|$ &  DB &  $\checkmark\;/\;\dag$&  Comments\\\hline
4&    4, 4, 8, 12, 24 & 24 &   1&  $\checkmark$&  \\ \hline
6&    6 & 12 &  1&  & \\
&    6, 36 & 24 &   2&  $\checkmark$&  \\ \hline
8&   8 & 16&  1&  & \\
&    8, 8, 8, 8, 16, 24, 24, 48 & 48 &   2&  $\checkmark$&  \\ \hline
10&    10 & 20 &   1&  &\\
&    10 & 20 &  2&  &\\
&    20, 60, 120 & 120 &   3&  $\checkmark$&  \\ \hline
12&    12, 24 & 24 &  2&  & \\
 &   24, 24 & 48 &  4& $\dag$ &\\ \hline
16&   16, 16, 32, 32, 64, 64 & 128 &  2&$\dag$ &  \\
 &  16 & 32 &  3&  & \\
 &   16, 48 & 96 &  4&  $\checkmark$&  \\ \hline
18&  18, 108 & 216 &  4&  $\checkmark$&  \\
 &  36 & 72 &  5&  &\\ \hline
20&    20 & 20 &   2& & \\
 &  160, 160 & 320 &  3&$\dag$ & \\
 &  60 & 120 &  6&  $\checkmark$&  \\
 &  120 & 240 &  7&  $\checkmark$&  \\ \hline
24&   24 & 144 &  2&  $\checkmark$&  \\
 &  24 & 48 &  8&  &\\
 &  24 & 24 &  9&  &\\
 &  24 & 48 &  10& & \\
 &  24, 24 & 48 &  11&$\dag$  & \\ \hline
30&   720 & 1\,440 &  8&  $\checkmark$&  \\
&  60, 120 & 120 &  9&  & \\
 &   60 & 60 &  10&  & \\ \hline
32&    32 & 64 &  2&  & \\
&    32, 32, 64, 64 & 128 &  3& $\dag$&  \\
&    32, 96 & 192 &  4&  $\checkmark$&  \\ \hline
36&    36 & 72 &  9&  & \\ \hline
40&    160, 160 & 320 &  12&$\dag$  & \\ \hline
50&   100 & 200 &  7& & \\ 
 &   50, 150 & 300 &  8&  $\checkmark$&  \\ \hline
54&    108 & 216 &  11&  &\\ \hline
60&    60  & 360 & 2&  $\checkmark$&  \\
 &   60, 120 & 120 &  3&  &\\
 &   60 & 60 &  4&  &\\
 &   60 & 120 &  5&  &\\
 &   60 & 120 &  6&  &\\
 &   60, 120& 120 &  7&  &\\
 &   60 & 120 &  8&  &\\
 &   60 & 120 &  9&  &\\
 &   60  & 120 & 10&  &\\ \hline
64&    64, 192 & 384 &  2&  $\checkmark$&  \\
 &   64 & 256 &  4&  &\\
&   64, 64 & 128 &  11& $\dag$ &\\ \hline
80&    80, 160 &160 &  29&  &\\
 &   160, 160 &320 &  31&$\dag$  &\\ \hline
90&   720 &1\,440 &  20&  &\\ \hline
96&    96 &192  & 37&  &\\ \hline
100&    100 &200 &  19&  &\\ \hline
128&  128&256 &  5 &  &\\ \hline
     160& 160 &160 &  89&  &\\ 
     &  160 &160 & 90&  &\\ 
     &  160 &320 & 91 &  &\\
     &  160 &320 & 92 &  &\\
     &  160 &320 & 93 &$\dag$  &\\
     &  160 &320 & 94 &  &\\ \hline
     180&  720 & & 77&  &  \\ 
     &  360, 720& &  78&  &  \\ \hline
     250&  500& &  31&  &\\ \hline 
     256&  256, 768& &  30&  $\checkmark$&  \\ \hline 
     360&  360&720 &  176&  &\\ 
     &  360&720 &  177&  &\\ 
    &  360&720 &  178 &  &\\
    &  360&360 &  179 &  &\\
     &  360&720 &  180 &  &\\
     &  360&360 &  181 &  &\\
     &  360&720 &  182 &  &\\
     &  360&720 &  183 & & \\
    &  360&720  & 184 &  &\\
     &  360&720  & 185 &  &\\
     &  720&1\,440  & 268 &  & \\
     &  720&1\,440 &  270 &   & \\ \hline
     512&  512&1\,024  &  734& & \\ \hline
     810&  1\,620&1\,620 &  198&  & \\  \hline
      1\,024&  1\,024, 3\,072&6\,144 &  3\,470 &  $\checkmark$ & \\ \hline
      1\,250&  2\,500 & 2\,500 &  187 &  &  \\ \hline
      1\,280&  1\,280&2\,500 & 2\,591&  & \\ \hline
2\,560&  2\,560&5\,120  & &  & \\
\hline
6\,250 & 12\,500 & 25\,000 &&&covers of the graph with $1\,250$\\
& 12\,500 & 12\,500 &&&vertices, there are 2 graphs\\
\hline
31\,250 & 62\,500 & 125\,000 &&&covers of the graphs \\
  & 62\,500 & 125\,000 &&&with $6\,250$ vertices,\\
  & 62\,500 & 125\,000 &&&there are five graphs\\
  & 62\,500 & 62\,500 &&&\\
  & 62\,500 & 62\,500 &&&\\
\hline

65\,610 & 131\,220 &?&&&cover of the graph with $810$ vertices, only one graph\\
\hline
2$\cdot$ 5$^\ell$&4$\cdot$ 5$^\ell$&&&&$7\le \ell\le 34$\\
\hline
\caption{Exceptional cases for Theorem~\ref{thm.main:1.2}}\label{table:table}
\end{longtable}

\section{Main ingredients}\label{sec2}
\subsection{Permutations}
A permutation on the set $\Omega$ is a \emph{derangement} if it fixes no elements in $\Omega$. A permutation is \emph{semiregular} if all of its cycles have the same length. For instance, any derangement of prime order is semiregular. A permutation group $G$ on $\Omega$ is said to be \emph{transitive} if it has a single orbit on $\Omega$, and \emph{semiregular} if the identity is the only element fixing some points. If $G$ is both semiregular and transitive on $\Omega$, then $G$ is \emph{regular} on $\Omega$. Given a permutation group $G$, and an element $\alpha\in \Omega$, we denote by $\alpha^G$ the orbit of $\alpha$ under the action of $G$.

\begin{lem}\label{lem:2.1}
	Let $G$ be a permutation group on $\Omega$, and let $p$ be a prime. If all the elements of $G$ of order $p$ are derangements, then all $p$-elements of $G$ are semiregular.
\end{lem}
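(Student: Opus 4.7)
The plan is to reduce the problem to the hypothesis by taking an appropriate power of an arbitrary $p$-element. Let $g$ be a $p$-element of $G$, so $g$ has order $p^k$ for some $k\ge 0$. If $k=0$ then $g$ is the identity, which is vacuously semiregular, so I would assume $k\ge 1$ and aim to show that every cycle of $g$ on $\Omega$ has length exactly $p^k$.

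The first step is the basic observation that every cycle of $g$ has length a divisor of $o(g)=p^k$, hence a power of $p$, and that at least one cycle has length exactly $p^k$ (otherwise $g$ would have smaller order). I then consider the element $h:=g^{p^{k-1}}$. Since $g$ has order $p^k$, a direct computation gives that $h$ has order $p^k/\gcd(p^k,p^{k-1})=p$, so $h$ is an element of $G$ of order $p$.

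The key step is the following contrapositive. Suppose, for contradiction, that $g$ has a cycle $C$ of length $p^i$ with $i\le k-1$. Then $p^i$ divides $p^{k-1}$, and restricted to the support of $C$ the element $g^{p^{k-1}}$ acts trivially. Therefore every point of $C$ is a fixed point of $h$, so $h$ is not a derangement. This contradicts the hypothesis that every element of $G$ of order $p$ is a derangement. Hence every cycle of $g$ has length $p^k$, which is precisely the statement that $g$ is semiregular.

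There is no real obstacle here: once one realises that raising to the power $p^{k-1}$ converts the data of a non-maximal-length cycle into the data of a fixed point for an order-$p$ element, the lemma reduces immediately to the hypothesis. The only mild care is in verifying that $g^{p^{k-1}}$ has order exactly $p$ (which uses that $g$ has some cycle of maximal length $p^k$, equivalently that $o(g)=p^k$).
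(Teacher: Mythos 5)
Your proof is correct and follows essentially the same route as the paper: both arguments pass to the power $g^{p^{k-1}}$, note it has order $p$, and observe that a cycle of $g$ of length at most $p^{k-1}$ would force this element to fix a point, contradicting the derangement hypothesis. No gaps.
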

\begin{proof}
	Let $g\in G$ be an element of order $p^k$, for some positive integer $k$. Aiming for a contradiction, assume that $g$ is not semiregular, that is, there exists $\alpha\in \Omega$ such that $|\alpha^{\langle g \rangle}|\leq p^{k-1}$. Hence $g^{p^{k-1}}$ fixes $\alpha$, which implies $g^{p^{k-1}}$ is not a derangement, a contradiction.
\end{proof}

\begin{lem}\label{lem:2.2}
	Let $G$ be a permutation group acting on $\Omega$, and let $p$ and $q$ be two distinct primes. If $G$ has a semiregular element $g$ of order $p$ and a semiregular element $h$ of order $q$ with $gh=hg$,
	then $gh$ is a semiregular element of order $pq$.
\end{lem}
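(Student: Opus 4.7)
The plan is straightforward since the hypothesis makes $\langle g,h\rangle$ very small and transparent. Because $g$ and $h$ commute and have coprime prime orders $p$ and $q$, their cyclic subgroups intersect trivially, so $\langle g,h\rangle=\langle g\rangle\times\langle h\rangle$ is cyclic of order $pq$ and is generated by $gh$. In particular $o(gh)=pq$, which handles the order part of the claim.

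For semiregularity, I would prove the stronger statement that the whole abelian group $\langle g,h\rangle$ acts semiregularly on $\Omega$; the cyclic subgroup $\langle gh\rangle$ then inherits this property at once. Fix an arbitrary $\alpha\in\Omega$ and let $S$ be the stabilizer of $\alpha$ in $\langle g,h\rangle$. Since $\langle g,h\rangle$ is cyclic of squarefree order $pq$, the only possibilities for $S$ are $1$, $\langle g\rangle$, $\langle h\rangle$, or the full group $\langle g,h\rangle$. The semiregularity of $g$ means that no nontrivial element of $\langle g\rangle$ fixes $\alpha$, so $\langle g\rangle\not\le S$; by the symmetric argument $\langle h\rangle\not\le S$. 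These two exclusions also eliminate the whole group, leaving only $S=1$.

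No step is genuinely an obstacle: the argument reduces to enumerating the subgroups of $C_{pq}$ and invoking the definition of semiregularity for $g$ and $h$ separately, and the conclusion that $gh$ is semiregular of order $pq$ is then immediate.
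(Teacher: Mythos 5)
Your proof is correct and is essentially the paper's argument in stabilizer language: the paper observes that $(gh)^p=h^p$ and $(gh)^q=g^q$ are semiregular and concludes every orbit of $\langle gh\rangle$ has length $pq$, while you equivalently identify $\langle g,h\rangle=\langle gh\rangle\cong C_{pq}$ and rule out its nontrivial subgroups as point stabilizers. Both versions rest on the same two facts, namely $o(gh)=pq$ and that no nontrivial element of $\langle g\rangle$ or $\langle h\rangle$ fixes a point.
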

\begin{proof}
	Since $gh=hg$, $o(gh)=pq$ and hence it remains to prove that $gh$ is semiregular. Note that $(gh)^p=h^p$ is semiregular, and also $(gh)^q=g^q$ is semiregular. Therefore, each orbit of $\langle gh\rangle$ has size $pq$, proving that $gh$ is semiregular.
\end{proof}

\subsection{Graphs}	
A \emph{digraph} is a binary relation
$		\Gamma=(V\Gamma,A\Gamma),$
where $A\Gamma\subseteq V\Gamma\times V\Gamma$. We refer to the elements of $V\Gamma$ as \emph{vertices} and to the elements of $A\Gamma$ as \emph{arcs}. In this paper, a \emph{graph} is a finite simple undirected graph, that is, a pair
$		\Gamma=(V\Gamma,E\Gamma),$
where $V\Gamma$ is a set of vertices, and $E\Gamma$ is a set of unordered pairs of $V\Gamma$, called \emph{edges}. In particular, a graph can be thought of as a digraph where the binary relation is symmetric and anti-reflexive.

The \emph{valency} of a vertex $\alpha \in V\Gamma$ is  the number of edges containing $\alpha$. A graph is said to be \emph{cubic} when all of its vertices have valency $3$. A connected graph is a \emph{cycle} when  all of its vertices have valency $2$.

Let $\Gamma$ be a graph, and let $G$ be a subgroup of the automorphism group $\mathrm{Aut}(\Gamma)$ of $\Gamma$. If $G$ is transitive on $V\Gamma$, we say that $G$ is \emph{vertex-transitive}, similarly, if $G$ is transitive on $A\Gamma$, we say that $G$ is \emph{arc-transitive}. Moreover, $\Gamma$ is vertex- or arc-transitive when $\mathrm{Aut}(\Gamma)$ is vertex- or arc-transitive.

Let $\alpha, \beta \in V\Gamma$ be two adjacent vertices. We denote by $G_\alpha$ the \emph{stabilizer} of the vertex $\alpha$, by $G_{\{\alpha, \beta\}}$ the setwise stabilizer of the edge $\{\alpha, \beta\}$, by $G_{\alpha \beta}$ the pointwise stabilizer of the edge $\{\alpha, \beta\}$ (that is, the stabilizer of the arc $(\alpha, \beta)$ underlying the edge $\{\alpha, \beta\}$).

Let $\Gamma$ be a graph, and let $N\leq \mathrm{Aut}(\Gamma)$. The \emph{normal quotient} $\Gamma/N$ is the graph whose vertices are the $N$-orbits of $V\Gamma$, and two $N$-orbits $\alpha^N$ and $\beta^N$ are adjacent if there exists an edge $\{\alpha',\beta'\}\in E\Gamma$ such that $\alpha'\in \alpha^N$ and $\beta'\in \beta^N$. Note that  the valency of $\Gamma/N$ is at most the valency of $\Gamma$, and that, whenever $\Gamma$ is conneted, so is $\Gamma/N$. Furthermore, if the group $N$ is normal in some $G\leq \mathrm{Aut}(\Gamma)$, then $G/N$ acts (possibly unfaithfully) on $\Gamma/N$. If the group $G$ acts vertex- or arc-transitively on $\Gamma$, then $G/N$ has the same property on $\Gamma/N$.

The following result is inspired by an analogous result for $4$-valent graphs in~\cite[Lemma~1.13]{PS.fixedVertex}. 

\begin{lem}\label{lemma:auxiliary}
	Let $\Gamma$ be a connected cubic graph, let $\alpha$ be a vertex of $\Gamma$, let $G$ be a vertex-transitive subgroup of $\mathrm{Aut}(\Gamma)$ and let $N$ be a semiregular normal
	subgroup of $G$. Suppose $G_\alpha$ is a non-identity $2$-group and that the normal quotient $\Gamma/N$ is a cycle of length $r \ge 3$, and denote by $K$ the kernel of the
	action of $G$ on the $N$-orbits on $V\Gamma$. Then either
\begin{enumerate}
\item\label{martedi1} $G_\alpha$ has order $2$ and $|K_\alpha|=1$, or
\item\label{martedi2} $r$ is even and $G_\alpha=K_\alpha$ is an elementary abelian $2$-group of order at most $2^{r/2}$.
\end{enumerate}
\end{lem}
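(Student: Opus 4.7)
The plan is to analyse the distribution of the three edges at $\alpha$ among its own $N$-orbit and the two neighbouring $N$-orbits in the cycle $\Gamma/N$, and to use this ``local pattern'' to pin down both $K_\alpha$ and the quotient $G_\alpha/K_\alpha$. Label the $N$-orbits $B^{(0)},B^{(1)},\dots,B^{(r-1)}$ in the cyclic order of $\Gamma/N$, with $\alpha\in B^{(0)}$. Since $N$ is normal in $G$ and acts regularly on each $B^{(i)}$, every vertex of $B^{(0)}$ has the same number $a$ of neighbours in $B^{(1)}$, the same number $b$ in $B^{(-1)}$, and the same number $c$ inside $B^{(0)}$, with $a+b+c=3$. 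The adjacency $B^{(0)}\sim B^{(\pm 1)}$ in the cycle forces $a,b\ge 1$, so the pattern is one of $(1,1,1)$, $(1,2,0)$ or $(2,1,0)$. Because $G/K$ embeds in $\mathrm{Aut}(\Gamma/N)=D_{2r}$, the image of $G_\alpha$ there lies in the stabiliser of $B^{(0)}$, which has order at most $2$; hence $[G_\alpha:K_\alpha]\le 2$, with equality only if some element of $G_\alpha$ swaps $B^{(1)}$ with $B^{(-1)}$. Such an element would induce a bijection between the $a$ neighbours of $\alpha$ in $B^{(1)}$ and its $b$ neighbours in $B^{(-1)}$, forcing $a=b$, that is, the pattern $(1,1,1)$.

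I would then dispatch the pattern $(1,1,1)$ to obtain~\eqref{martedi1}. In this case $\alpha$ has a unique neighbour in each of $B^{(0)}, B^{(1)}, B^{(-1)}$, so any $g\in K_\alpha$ must fix each of these three neighbours (each is singled out by its $N$-orbit, which $g$ preserves). Iterating the same observation at each of these neighbours and invoking the connectedness of $\Gamma$ forces $K_\alpha=1$; combined with $[G_\alpha:K_\alpha]\le 2$ and the hypothesis $G_\alpha\ne 1$, this gives $|G_\alpha|=2$.

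If instead the pattern at $\alpha$ is $(1,2,0)$ (or, symmetrically, $(2,1,0)$), then $a\ne b$ forces $G_\alpha=K_\alpha$. Counting edges between $B^{(i)}$ and $B^{(i+1)}$ from both endpoints, if $p_i$ denotes the number of ``right'' neighbours of a vertex in $B^{(i)}$, then $p_{i+1}=3-p_i$, so $p_i$ alternates between $1$ and $2$ and consistency around the cycle forces $r$ to be even. For the cardinality bound, I would consider the set $W$ of closed walks $\alpha=w_0,w_1,\dots,w_r=\alpha$ with $w_i\in B^{(i\bmod r)}$ and $w_{i-1}\sim w_i$: at each of the $r/2$ steps where $p_i=2$ there are two choices of $w_{i+1}$, while at the other steps $w_{i+1}$ is determined, giving $|W|=2^{r/2}$. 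The key claim is that the $K_\alpha$-stabiliser of any single $w\in W$ is trivial, since if $g\in K_\alpha$ fixes every $w_i$ then at $w_i$ the two walk-neighbours $w_{i-1},w_{i+1}$ are already fixed, so the third neighbour of $w_i$ is forced to be fixed as well, and connectedness then propagates this to every vertex. Therefore $|K_\alpha|\le|W|=2^{r/2}$.

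What remains, and is in my view the main obstacle, is to upgrade this bound to the elementary abelian conclusion. The intended line is that, after identifying $W$ with $\{0,1\}^{r/2}$ via the binary branching choices, $K_\alpha$ acts on $W$ through an elementary abelian $2$-group: each non-identity element should decompose as a product of commuting ``local swap'' involutions, one for each branching level of the walk, which act on disjoint pairs of walk-vertices. The delicate point is to rule out elements of order $4$, i.e.\ to verify that the swaps at different branching levels genuinely commute; here the rigid alternating $(1,2)$ local structure of $\Gamma$ (which is precisely what makes these graphs close relatives of the split Praeger--Xu graphs) has to be used, with the pattern analysis and the walk-stabiliser argument above supplying the main inputs. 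This completes the sketch of~\eqref{martedi2}.
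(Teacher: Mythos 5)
Your treatment of the case analysis is fine: the local pattern argument, the deduction $[G_\alpha:K_\alpha]\le 2$ with equality only when the two neighbouring orbits can be swapped, the disposal of the $(1,1,1)$ pattern giving conclusion~(1), and the parity argument forcing $r$ even and $G_\alpha=K_\alpha$ all parallel the paper's proof and are correct. The problems are in the second half. First, your key claim that the $K_\alpha$-stabiliser of a walk is trivial is not actually proved: from ``$g$ fixes every $w_i$'' you correctly get that $g$ fixes all neighbours of every $w_i$ (and in fact pointwise fixes each component of the $2$-regular bipartite graph between consecutive doubled blocks that contains a walk edge), but the final ``connectedness then propagates this to every vertex'' is not a valid step. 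The graph between $B^{(2i)}$ and $B^{(2i+1)}$ is a disjoint union of cycles, and the propagation only reaches components meeting the walk or its matched partners; on a component carrying just one or two isolated fixed vertices, $g$ could a priori act as a reflection, and nothing in your local closure rules excludes this. The paper closes exactly this hole by a global argument: since $N$ is semiregular, every vertex lies on at most one edge of each $N$-edge-orbit; by vertex-transitivity there are exactly two $N$-edge-orbits between $B^{(2i)}$ and $B^{(2i+1)}$ and one between $B^{(2i+1)}$ and $B^{(2i+2)}$; the kernel $X$ of the $K$-action on the set of $N$-edge-orbits equals $N$ (here the closest-unfixed-vertex argument works, precisely because an element of $X$ fixing a vertex fixes \emph{all} its neighbours). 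Without some such input, your walk-stabiliser claim is unsupported, even though it is true. (A smaller slip: requiring the walks to be closed makes the count $|W|=2^{r/2}$ unjustified, since $w_r=\alpha$ is not automatic; you should use all once-around walks from $\alpha$, of which there are exactly $2^{r/2}$.)

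Second, the elementary abelian conclusion is not an optional ``upgrade'' of the order bound but part of the statement, and you explicitly leave it open. In the paper both the exponent and the order bound come simultaneously from the same mechanism: $K/X=K/N$ embeds into $\mathrm{Sym}(2)^{r/2}$ via its action on the $r/2$ pairs of $N$-edge-orbits $\{\Theta_{2i},\Theta_{2i}'\}$, and since $K_\alpha\cap N=1$ (semiregularity of $N$), $K_\alpha$ embeds into this elementary abelian group of order at most $2^{r/2}$. Your walk-counting, even if repaired, yields at best the cardinality bound and gives no handle on elements of order $4$; the commuting ``local swap'' decomposition you hope for is essentially the $\mathrm{Sym}(2)^{r/2}$ embedding, and proving it requires the edge-orbit analysis you have not carried out. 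So the proposal has a genuine gap at both the order bound and the structure statement of part~(2).
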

\begin{proof}
Let $\Delta_0 , \Delta_1 , \ldots , \Delta_{r-1}$ be the orbits of $N$ in its action on $V\Gamma$.
Since $\Gamma/N$ is a cycle, we may assume that $\Delta_i$ is adjacent to $\Delta_{i-1}$ and $\Delta_{i+1}$ with indices computed modulo $r$. Moreover, without loss of generality, we suppose that $\alpha\in \Delta_0$.

As $G_\alpha$ is a non-identity $2$-group, by a connectedness argument, $G_\alpha$ induces a group of order $2$ in its action on the neighbourhood of $\alpha$. In particular, $G_\alpha$ fixes a unique neighbour of $\alpha$. As usual, for each $\beta\in V\Gamma$, let $\beta'$ be the unique neighbour of $\beta$ fixed by $G_\beta$. 

Suppose that $\{\alpha,\alpha'\}$ is contained in an $N$-orbit. Since $\alpha\in \Delta_0$, we deduce $\alpha'\in\Delta_0$. Let $\beta$ and $\gamma$ be the other two neighbours of $\alpha$. As $\Gamma/N$ is a cycle of length $r\ge 3$, we have $\beta\in \Delta_1$ and $\gamma\in\Delta_{r-1}$. Since $\mathrm{Aut}(\Gamma/N)$ is a dihedral group of order $2r$ and since $G_\alpha$ contains an element swapping $\beta$ and $\gamma$, we deduce $|G_\alpha:K_\alpha|=2$. Now, $K_\alpha$ fixes by definition each $N$-orbit and hence it fixes setwise $\Delta_1$ and $\Delta_{r-1}$. Therefore, $K_\alpha$ fixes $\beta$ and $\gamma$, because $\beta$ is the unique neighbour of $\alpha$ in $\Delta_1$ and $\gamma$ is the unique neighbour of $\alpha$ in $\Delta_{r-1}$. This shows that $K_\alpha$ fixes pointwise the neighbourhood of $\alpha$; now, a connectedness argument shows that $K_\alpha=1$. In particular, part~\eqref{martedi1} is satisfied. For the rest of the argument, we suppose that $\{\alpha,\alpha'\}$ is not contained in an $N$-orbit.

This means that $\alpha$ has two neighbours in an $N$-orbit, say $\Delta_1$, and only one neighbour in the other $N$-orbit, say $\Delta_{r-1}$. (Thus $\alpha'\in\Delta_{r-1}$ and $\beta,\gamma\in \Delta_1$.)
 This implies that $r$ is even and, for every $i\in\{0,\ldots,r/2-1\}$, each vertex in $\Delta_{2i}$ has two neighbours in $\Delta_{2i+1}$ and only one neighbour in $\Delta_{2i-1}$. Therefore, $G/K$ is a dihedral group of order $r$ when $r\ge 8$ and  $G/K$ is elementary abelian of order $4$ when $r=4.$ Morever, $G/K$ acts regularly on $\Gamma/N$ and hence $G_\alpha=K_\alpha$. It remains to show that $K_\alpha$ is an elementary abelian $2$-group of order at most $2^r$.

Since $N$ is normal in $G$, the orbits of $N$ on the edge-set $E\Gamma$ form a $G$-
invariant partition of $E\Gamma$. We claim that, no two edges incident to a fixed
vertex of $\Gamma$ belong to the same $N$-edge-orbit. We argue by contradiction and
we suppose that $\alpha$ has two distinct neighbours $v$
and $w$ such that the edges $\{\alpha, v\}$ and $\{\alpha, w\}$ are in the same $N$-edge-orbit. In particular, there exists $n\in N$ with $\{\alpha,v\}^n=\{\alpha,w\}$. This gives $\alpha^n=\alpha$ and $v^n=w$, or $\alpha^n=w$ and $v^n=\alpha$. Since there are no edges inside an $N$-orbit, we cannot have $\alpha^n=w$ and $v^n=\alpha$. Therefore, $\alpha^n=\alpha$ and $v^n=w$. Since $N$ acts semiregularly on $V\Gamma$, we have $n=1$ and hence $v=v^n=w$, which is a contradiction.

Since $G$ is vertex-transitive,  the edges between $\Delta_{2i}$
and $\Delta_{2i+1}$ are partitioned into precisely two $N$-edge-orbits, let’s call these
two orbits $\Theta_{2i}$ and $\Theta_{2i}'$; whereas the edges between $\Delta_{2i}$ and $\Delta_{2i-1}$ form one $N$-edge-orbit, which we call $\Theta_{2i}''$.	
	
An element of $K$ (fixing setwise the sets $\Delta_{2i}$ and $\Delta_{2i+1}$) can map an edge in $\Theta_{2i}$ only to an edge in $\Theta_{2i}$
or to an edge in $\Theta_{2i}'$. On the other hand, as $G_\alpha$ is not the identity group, for every vertex $v \in\Delta_{2i}$  there is an
element $g \in G_v$ which maps an edge of $\Theta_{2i}$ incident to $v$ to the edge of $\Theta_{2i}'$
incident to $v$; and this element $g$ is clearly an element of $K$, because $G/K$ acts semiregularly on $\Gamma/N$. This shows that
the orbits of $K$ on $E\Gamma$ are precisely the sets $\Theta_{2i}\cup\Theta_{2i}',\Theta_{2i}''$, $i\in \{0,\ldots,r/2-1\}$. In other words,
each orbit of the induced action of $K$ on the set $E\Gamma/N = \{e^N : e \in E\Gamma \}$ has
length at most $2$. Consequently, if $X$ denotes the kernel of the action of $K$ on $E\Gamma$,
then $K/X$ embeds into $\mathrm{Sym}(2)^{r/2}$ and is therefore an elementary abelian 2-
group of order at most $2^{r/2}$.	

Let us now show that $X = N$. Clearly, $N \le X$. Let $v \in \Delta_0$. Since $N$ is
transitive on $\Delta_0$, it follows that $X = NX_v$. Suppose that $X_v$ is non-trivial and
let $g$ be a non-trivial element of $X_v$. Further, let $w$ be a vertex which is closest
to $v$ among all the vertices not fixed by $g$, and let $v = v_0 \sim v_1 \sim \cdots \sim v_m = w$ be
a shortest path from $v$ to $w$. Then $v_{m-1}$ is fixed by $g$. Since $g$ fixes each $N$-edge-orbit setwise and since every vertex of $\Gamma$ is incident to at most one
edge in each $N$-edge-orbit, it follows that $g$ fixes all the neighbours of $v_{m-1}$,
thus also $v_m$. This contradicts our assumptions and proves that $X_v$ is a
trivial group, and hence that $X = N$.
\end{proof}

\subsection{Praeger-Xu graphs}\label{PX}

To introduce the infinite family of split Praeger-Xu graphs $\mathrm{sC}(r,s)$, we need two ingredients: the Praeger-Xu graphs and the splitting operation. This section is devoted to introduce the ubiquitous $4$-valent Praeger-Xu graphs $\mathrm{C}(r,s)$ and their automorphism group. This infinite family was originally defined in \cite{PraegerXu}, and it was studied in detail by Gardiner, Praeger and Xu in~\cite{PraegerXu,GardinerPraeger.charTetraSymGraphs}, and more recently in~\cite{JajcayPW.cycleStrPrXuGraphs}. Here, we introduce them through their directed counterparts defined in~\cite{MR1029176}.

Let $r$ be an integer, $r \ge 3$. Then $\vec{\mathrm{C}}(r,1)$ is the lexicographic product of a directed cycle of length $r$ with an edgeless graph on $2$ vertices. In other words,
$\mathrm{V}\vec{\mathrm{C}}(r,1)=\mathbb{Z}_r\times\mathbb{Z}_2$ with the out-neighbours of a vertex $(x, i)$ being $(x+1,0)$ and $(x+1,1)$. We will identify the $(s-1)$-arc
\[(x,\varepsilon_0)\sim(x+1,\varepsilon_1)\sim\ldots\sim(x+s-1,\varepsilon_{s-1})\]
with the pair $(x; k)$ where $k=\varepsilon_0\varepsilon_1\ldots\varepsilon_{s-1}$ is a string in $\mathbb{Z}_2$ of length $s$. For $s \ge 2$, let $\mathrm{V}\vec{\mathrm{C}}(r,s)$ be the set of all $(s-1)$-arcs of $\vec{\mathrm{C}}(r, 1)$, let $h$ be a string in $\mathbb{Z}_2$ of length $s-1$ and let $\varepsilon\in \mathbb{Z}_2$. The out-neighbours of $(x; \varepsilon h) \in \mathrm{V}\vec{\mathrm{C}}(r,s)$ are $(x+1; h0)$ and $(x+1;h1)$. The \emph{Praeger-Xu graph} $\mathrm{C}(r,s)$ is then defined as the underlying graph of $\vec{\mathrm{C}}(r,s)$. We have that $\mathrm{C}(r, s)$ is a connected $4$-valent graph with $r2^s$ vertices (see~\cite[Theorem~2.8]{MR1029176}).

Let us now discuss the automorphisms of the graphs $\mathrm{C}(r, s)$. Every automorphism of $\vec{\mathrm{C}}(r,1)$ ($\mathrm{C}(r, 1)$, respectively) acts naturally as an automorphism of $\vec{\mathrm{C}}(r, s)$ ($\mathrm{C}(r, s)$, respectively) for every $s \ge 2$.
For $i \in \mathbb{Z}_r$, let $\tau_i$ be the transposition on $\mathrm{V}\vec{\mathrm{C}}(r,1)$ swapping the vertices $(i,0)$
and $(i, 1)$ while fixing every other vertex. This is clearly an automorphism of $\vec{\mathrm{C}}(r,1)$, and thus also of $\vec{\mathrm{C}}(r,s)$ for $s\ge 2$. Let
\[K := \langle \tau_i \mid i \in \mathbb{Z}_r\rangle,\]
and observe that $K\cong  C_2^r$. Further, let $\rho$ and $\sigma$ be the permutations
on $\mathrm{V}\vec{\mathrm{C}}(r,1)$ defined by
\[(x, i)^\rho := (x + 1, i) \quad \hbox{and} \quad (x, i)^\sigma := (x, -i).\]
Then $\rho$ is an automorphism of $\vec{\mathrm{C}}(r, 1)$ or order $r$, and $\sigma$ is an involutory automorphism of $\mathrm{C}(r,1)$ (but not of $\vec{\mathrm{C}}(r,1)$). Observe that the group $\langle \rho, \sigma\rangle$ normalises $K$. Let
\[H := K\langle \rho, \sigma\rangle \quad \hbox{and} \quad H^+ := K\langle \rho\rangle.\]
Then, for every $r \ge 3$ and $s \ge 1$,
\[C_2 \mathrm{wr} D_r \cong H \le \mathrm{Aut}(\mathrm{C}(r, s)) \quad \textup{and} \quad C_2 \mathrm{wr} C_r \cong H^+ \le \mathrm{Aut}(\vec{\mathrm{C}}(r,s)).\]
Moreover, $H$ ($H^+$, respectively) acts arc-transitively on $\mathrm{C}(r,s)$ 
($\vec{\mathrm{C}}(r,s)$, respectively) whenever $1 \le s \le r-1$. With three exceptions,
the groups $H$ and $H^+$ are in fact the full automorphism groups of $\mathrm{C}(r, s)$ and
$\vec{\mathrm{C}}(r,s)$, respectively.

\begin{lem}[{{\cite[Theorem 2.13]{GardinerPraeger.charTetraSymGraphs} and \cite[Theorem 2.8]{MR1029176}}}] \label{aux1}
	The automorphism group of a directed Praeger-Xu graph is \[\mathrm{Aut}(\vec{\mathrm{C}}(r, s)) = H^+,\]
	and, if $r\ne 4$, the automorphism group of a Praeger-Xu graph is
	\[\mathrm{Aut}(\mathrm{C}(r, s)) = H.\]
	Moreover,
	\[| \mathrm{Aut}(\mathrm{C}(4, 1)) : H| = 9, \quad | \mathrm{Aut}(\mathrm{C}(4, 2)) : H| = 3\]
	\[\hbox{and} \quad | \mathrm{Aut}(\mathrm{C}(4, 3)) : H| = 2.\]
\end{lem}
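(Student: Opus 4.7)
The strategy is to establish the nontrivial inclusions $|\mathrm{Aut}(\vec{\mathrm{C}}(r,s))|\le |H^+|$ and $|\mathrm{Aut}(\mathrm{C}(r,s))|\le |H|$ (subject to the known exceptions at $r=4$), since the reverse inclusions are already built into the explicit description of $H^+$ and $H$. The proof would proceed by induction on $s$, with the two statements settled simultaneously.

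For the base case $s=1$, the digraph $\vec{\mathrm{C}}(r,1)$ is the lexicographic product of the directed cycle $\vec{C}_r$ with $\overline{K_2}$, so the fibres $\Delta_x=\{(x,0),(x,1)\}$ are precisely the maximal sets of vertices sharing the same out-neighbourhood. Consequently every automorphism permutes these blocks, which gives an embedding $\mathrm{Aut}(\vec{\mathrm{C}}(r,1))\hookrightarrow \mathrm{Sym}(2)\wr\mathrm{Aut}(\vec{C}_r)=H^+$. The same argument characterises the $\Delta_x$ via identical open neighbourhoods in the undirected graph $\mathrm{C}(r,1)$, so long as $r\ne 4$. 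When $r=4$ the characterisation fails because $\mathrm{C}(4,1)\cong K_{4,4}$, whose automorphism group has order $2\cdot(4!)^2$, producing the exceptional index $9=|\mathrm{Aut}(\mathrm{C}(4,1)):H|$.

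For $s\ge 2$ I would exploit the description of $\vec{\mathrm{C}}(r,s)$ as the $(s-1)$-fold iterated line-digraph of $\vec{\mathrm{C}}(r,1)$. By functoriality, every automorphism of $\vec{\mathrm{C}}(r,1)$ lifts to one of $\vec{\mathrm{C}}(r,s)$, giving the inclusion $H^+\le \mathrm{Aut}(\vec{\mathrm{C}}(r,s))$. The converse direction rests on Heuchenne's theorem: since $\vec{\mathrm{C}}(r,1)$ satisfies Heuchenne's condition (no two distinct directed $2$-paths share both endpoints, clearly true for $r\ge 3$) and this property persists under iterated line-digraph, the functor is invertible on this class, and therefore $\mathrm{Aut}(\vec{\mathrm{C}}(r,s))\cong\mathrm{Aut}(\vec{\mathrm{C}}(r,1))=H^+$. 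The undirected statement is then recovered by applying the same mechanism after orienting each edge of $\mathrm{C}(r,s)$ according to the underlying directed structure; the three exceptional pairs $(r,s)=(4,1),(4,2),(4,3)$ are small enough to be handled by a direct case analysis that produces the indices $9$, $3$ and $2$ respectively.

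The main obstacle is this inductive step for $s\ge 2$: showing that no ``new'' automorphism can appear when passing from $\vec{\mathrm{C}}(r,1)$ to its iterated line-digraph. Concretely, this reduces to verifying Heuchenne's condition and then carefully tracking how a hypothetical extra automorphism would act on the $(s-1)$-arcs indexing the vertices of $\vec{\mathrm{C}}(r,s)$. The short closed walks of length $r$ are precisely what force the separate treatment of $r=4$ with $s\le 3$: there, the $(s-1)$-arcs accidentally admit symmetries not visible at level $s=1$, and one has to enumerate these by hand to obtain the stated indices.
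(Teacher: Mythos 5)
Your attempt addresses a statement the paper itself does not prove: Lemma~\ref{aux1} is imported verbatim from Gardiner--Praeger and from the reference for the directed graphs, so the comparison to make is with those external proofs. Your directed half is essentially sound: $\vec{\mathrm{C}}(r,s)$ is indeed the $(s-1)$-st iterated line digraph of $\vec{\mathrm{C}}(r,1)$; the base case via vertices with identical out-neighbourhoods is correct (including the $K_{4,4}$ degeneration at $r=4$, giving the index $9$); and for a finite simple digraph $D$ with all in- and out-degrees positive, passing to the line digraph does induce an isomorphism of automorphism groups, since the classes of vertices of the line digraph with equal out-neighbourhoods (respectively in-neighbourhoods) are precisely the sets of arcs of $D$ with a common head (respectively tail). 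But the condition you call Heuchenne's is misstated: ``no two distinct directed $2$-paths share both endpoints'' is actually false in $\vec{\mathrm{C}}(r,1)$, where there are exactly two directed $2$-paths from $(x,i)$ to $(x+2,j)$; the correct hypothesis is the equal-or-disjoint (out-)neighbourhood condition. That slip is repairable, since the automorphism transfer can be proved directly as above.

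The genuine gap is the undirected statement for $s\ge 2$. You propose to ``orient each edge of $\mathrm{C}(r,s)$ according to the underlying directed structure'' and then quote the directed result, but this is circular: the whole content of $\mathrm{Aut}(\mathrm{C}(r,s))=H$ is that every automorphism of the undirected graph preserves or reverses that orientation, i.e.\ preserves the pair formed by the arc set of $\vec{\mathrm{C}}(r,s)$ and its reverse. For $s=1$ this comes for free from your identical-neighbourhood characterisation of the fibres, but for $s\ge 2$ no two distinct vertices of $\mathrm{C}(r,s)$ have equal neighbourhoods (the vertices $(x;0h)$ and $(x;1h)$ share their out-neighbours but not their in-neighbours), so the $s=1$ argument does not transfer, and nothing in your proposal supplies an automorphism-invariant reconstruction of the orientation (equivalently, of the position partition, or of the partition $\mathcal{S}$ into $4$-cycles used later in the paper). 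That reconstruction is where the real work in the cited theorem lies, and it is exactly where the further exceptions at $(r,s)=(4,2)$ and $(4,3)$ arise; settling those two graphs (on $16$ and $32$ vertices) by direct computation is acceptable, but for general $r\ne 4$ the orientation-recovery step must be proved rather than assumed.
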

The Praeger-Xu graphs also admit the following algebraic characterization.

\begin{lem}[{{\cite[Lemma~1.11]{PS.fixedVertex} or~\cite[Lemma~3.7]{BGS}}}]\label{lem:aux}
	Let $\Gamma$ be a finite connected $4$-valent graph, let $G$ be a vertex- and edge-transitive group of automorphisms of $\Gamma$, and let $N$ be a minimal normal subgroup of $G$.
	If $N$ is a $2$-group and $\Gamma/N$ is a cycle of length at least $3$, then $\Gamma$ is isomorphic to a Praeger-Xu
	graph $\mathrm{C}(r, s)$ for some positive integers $r\leq 3$ and $s\leq r-1$.
\end{lem}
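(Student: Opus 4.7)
The plan is to exploit the minimality of $N$ and the cyclic structure of $\Gamma/N$ to pin down the local combinatorics of $\Gamma$, upgrade to semiregularity of $N$, and then identify $\Gamma$ with a Praeger-Xu graph.

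As a minimal normal $2$-subgroup of the finite group $G$, $N$ is elementary abelian: $N\cong C_2^{t}$. Let $\Delta_0,\ldots,\Delta_{r-1}$ be the $N$-orbits on $V\Gamma$, cyclically labelled so that $\Delta_i$ is adjacent in $\Gamma/N$ precisely to $\Delta_{i\pm 1}$. Since $N\trianglelefteq G$ and $G$ is vertex-transitive, all $\Delta_i$ share a common size $m$, and by vertex-transitivity each $v\in\Delta_i$ has constants $a$ neighbours in $\Delta_{i+1}$ and $b$ in $\Delta_{i-1}$, with $a+b=4$. Double counting the edges between $\Delta_i$ and $\Delta_{i+1}$ from both sides of the partition yields $ma=mb$, whence $a=b=2$. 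Thus each vertex has exactly two neighbours in each of its two adjacent $N$-orbits.

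For semiregularity, set $N_i:=N_v$ for $v\in\Delta_i$ (well-defined since $N$ is abelian and transitive on $\Delta_i$). Each $N_i$ is a subspace of the $\mathbb{F}_2$-vector space $N$, and $G$ permutes $\{N_0,\ldots,N_{r-1}\}$; therefore $\bigcap_i N_i$ and $\sum_i N_i$ are both $G$-invariant and, by minimality of $N$, each equals $1$ or $N$. Faithfulness of $G$ on $V\Gamma$ gives $\bigcap_i N_i=1$, and $N\ne 1$ gives $\sum_i N_i=N$. Suppose for contradiction that $N_0\ne 1$. A non-trivial $n\in N_0$ fixes $\Delta_0$ pointwise and acts on the $2$-regular bipartite graph between $\Delta_0$ and $\Delta_1$ as an automorphism fixing one side; in every connected component of length at least $6$ such an automorphism must be trivial, giving $n\in N_1$, and iterating around $\Gamma/N$ yields $n\in\bigcap_i N_i=1$. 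The residual case in which all components are $4$-cycles I would handle separately by identifying the natural ``local-swap'' subgroup as a proper $G$-invariant subgroup of $N$, contradicting minimality.

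With $|N|=m=2^{s}$, I coordinatize $V\Gamma$ by $\mathbb{Z}_r\times\mathbb{Z}_2^{s}$ using a cyclic generator of the $G/N$-action on $\Gamma/N$ together with the regular $N$-action on each $\Delta_i$. The $(2,2)$-adjacency, edge-transitivity, and compatibility across consecutive orbits then translate into the ``$(s-1)$-arc'' incidence rule on $\vec{\mathrm{C}}(r,1)$ that defines $\mathrm{C}(r,s)$; the bound $s\le r-1$ is forced by the faithfulness of $N$ on $V\Gamma$, since a longer string would wrap around the cycle and collapse the labelling. The main obstacle is this last identification step. Rather than attempting a direct combinatorial bijection (which becomes cumbersome for large $s$), I would proceed by induction on $s$, realizing $\mathrm{C}(r,s)$ as an arc-graph derived from $\mathrm{C}(r,s-1)$ and reproducing the same quotient inside $\Gamma$ through an appropriate index-$2$ $G$-invariant subgroup of $N$.
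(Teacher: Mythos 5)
Your argument breaks down at the semiregularity step, and this is not a repairable detail: under the stated hypotheses $N$ need not be semiregular, so no proof can establish it. Concretely, take $\Gamma=\mathrm{C}(r,s)$ with $s\le r-2$, $G=H=K\langle\rho,\sigma\rangle$ as in Section~\ref{PX}, and let $N\le K\cong C_2^r$ be the even-weight submodule; for suitable $r$ this is an irreducible $\mathbb{F}_2D_r$-module, hence a minimal normal $2$-subgroup of $G$ (e.g.\ $r=3$, $s=1$, the octahedron, or $r=5$, $s=2$). Its orbits are exactly the $K$-orbits, so $\Gamma/N$ is a cycle of length $r\ge 3$, yet the stabilizer in $N$ of a vertex has order $2^{r-1-s}>1$. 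The reason your reduction seemed to work is that you treated the case in which the bipartite graphs between consecutive $N$-orbits are unions of $4$-cycles as a ``residual'' case; in fact this is the \emph{only} case that occurs in the target graphs (these $4$-cycles are precisely the cycles of the partition $\mathcal{S}$ of Definition~\ref{def:2.4}), and there your ``fixing one side forces triviality'' argument and the proposed ``local-swap subgroup'' fix cannot yield semiregularity, because semiregularity is simply false in the example above. Consequently the coordinatization $V\Gamma\cong\mathbb{Z}_r\times\mathbb{Z}_2^{s}$ via a regular action of $N$ on each orbit, and everything after it, has no foundation.

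Two further points. First, your derivation of the $(2,2)$ local structure is also too quick: vertex-transitivity only gives that the pair of counts $\{a_i,b_i\}$ toward the two adjacent orbits is constant as an unordered pair, and double counting gives $a_i=4-a_{i+1}$; ruling out an alternating $(1,3)$ pattern (and edges inside an orbit) requires edge-transitivity, which is in the hypothesis but not invoked. Second, the final identification with $\mathrm{C}(r,s)$ is only sketched (``induction on $s$ via arc-graphs''), and since it is fed by the false premise that $|N|=2^{s}$ acts regularly on each orbit, it cannot be assessed as it stands. Note also that the paper does not prove this lemma internally but cites \cite[Lemma~1.11]{PS.fixedVertex} and \cite[Lemma~3.7]{BGS}; a correct argument along known lines works with the kernel of the action of $G$ on the $N$-orbits and the $4$-cycle decomposition between consecutive orbits, without ever assuming $N$ semiregular, and then identifies $\Gamma$ with a Praeger--Xu graph (incidentally, the bound in the statement should read $r\ge 3$, not $r\le 3$).
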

For more details on Praeger-Xu graphs, we refer also to \cite{JajcayPW.cycleStrPrXuGraphs, MR4321645, barbieri_grazian_spiga_2022}.

\subsection{The splitting and merging operations}\label{split}
The operation of \emph{splitting} were introduced in \cite[Construction 11]{PSV.cubicCensus}. Let $\Delta$ be a $4$-valent graph, let $\mathcal{C}$ be a partition of $E\Delta$ into cycles.
By applying the splitting operation to the pair $(\Delta,\mathcal{C})$, we obtain the graph, denoted by $\mathrm{s}(\Delta,\mathcal{C})$, whose vertices are
\[ V\mathrm{s}(\Delta,\mathcal{C}) := \{(\alpha,C)\in V\Delta \times \mathcal{C} \mid \alpha \in VC \},\]
and such that two vertices $(\alpha,C)$ and $(\beta,D)$ are declared adjacent if either $C\neq D$ and $\alpha=\beta$, or $C=D$ and $\alpha$ and $\beta$ are adjacent in $\Delta$. Observe that, since $\Delta$ is $4$-valent, there are precisely $2$ cycles in $\mathcal{C}$ passing through $\alpha$, thus $\mathrm{s}(\Delta,\mathcal{C})$ is cubic and $|V\mathrm{s}(\Delta,\mathcal{C})|=2|V\Delta|$.

Notice that, for any $G\leq \mathrm{Aut}(\Delta)$ such that its action is $\mathcal{C}$-invariant, $G\leq \mathrm{Aut}(\mathrm{s}(\Delta,\mathcal{C}))$. Moreover, if $G$ is also arc-transitive on $\Delta$ (in particular, the action of $G_\alpha$ on the neighbourhood of $\alpha$ is either the Klein four group, or the cyclic group of order $4$, or the dihedral group of order $8$), then $G$ is vertex-transitive on $\mathrm{s}(\Delta,\mathcal{C})$. For any vertex $(\alpha,C)\in \mathrm{s}(\Delta,\mathcal{C})$,
\[G_{(\alpha,C)} = G_\alpha \cap G_{\{C\}},\]
where $G_{\{C\}}$ is the setwise stabilizer of the cycle $C$. In particular, whenever $G$ is arc-transitive on $\Delta$, as $G_\alpha$ switches the two cycles passing through $\alpha$, $|G_\alpha:G_{(\alpha,C)}|=2$.

Now, we introduce the tentative inverse of the splitting operator: the operation of \emph{merging} (see \cite[Construction~7]{PSV.cubicCensus}). Let $\Gamma$ be a connected cubic graph, and let $G\leq \mathrm{Aut}(\Gamma)$ be a vertex-transitive group such that the action of $G_\alpha$ on the neighbourhood of $\alpha$ is cyclic of order $2$. In particular, $G_\alpha$ is a non-identity $2$-group. Hence, $G_\alpha$ fixes a unique neighbour of $\alpha$, which we denote by $\alpha'$. Observe that $(\alpha')' = \alpha$ and $G_\alpha = G_{\alpha'}$. Thus, the set $\mathcal{M} := \{\{\alpha, \alpha' \} \mid \alpha \in V\Gamma\}$ is a complete matching of $\Gamma$, while the edges outside $\mathcal{M}$ form a 2-factor, which we denote by $\mathcal{F}$. The group $G$ in its action on $E\Gamma$ fixes setwise both $\mathcal{F}$ and $\mathcal{M}$, and acts transitively on the arcs of each of these two sets. Let $\Delta$ be the graph with vertex-set $\mathcal{M}$ and two vertices $e_1 , e_2 \in \mathcal{M}$ are declared adjacent if they are (as edges of $\Gamma$) at distance $1$ in $\Gamma$. We may also think of $\Delta$ as being obtained by contracting all the edges in $\mathcal{M}$. Let $\mathcal{C}$ be the decomposition of $E\Delta$ into cycles given by the connected components of the the 2-factor $\mathcal{F}$. The merging operation applied to the pair $(\Gamma,G)$ gives as a result the pair $(\Delta,\mathcal{C})$.

Two infinite families of cubic graph have degenerate merged graphs, namely the circular and M\"{o}bius ladders. For any $n\ge 3$, a \emph{circular ladder graph} is a graph isomorphic to the Cayley graph
\[\mathrm{Cay}(\mathbb{Z}_n\times \mathbb{Z}_2,\{(0,1),(1,0),(-1,0)\}),\]
and, for any $n\ge 2$, a \emph{M\"{o}bius ladder graph} is a graph isomorphic to the Cayley graph \[\mathrm{Cay}(\mathbb{Z}_{2n},\{1,-1,n\}).\]
Observe that we consider the complete graph on $4$ vertices to be a M\"{o}bius ladder graph.

\begin{lem}\label{ladder1}
	Let $\Lambda$ be a (circular or M\"{o}bius) ladder, and let $G\leq \mathrm{Aut}(\Lambda)$ be a vertex-transitive group. Then either $|V\Lambda|\leq 10$ or $G$ contains a semiregular element of order at least $6$.
\end{lem}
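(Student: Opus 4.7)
The plan is to use the explicit structure of the automorphism groups of circular and M\"obius ladders and, in each case, produce a semiregular element of order at least $n := |V\Lambda|/2$. Since $|V\Lambda|\le 10$ is permitted in the conclusion, we may assume $n\ge 6$. For this range it is classical that the automorphism group of the circular ladder is $\langle r, s\rangle \times \langle t\rangle \cong D_n\times C_2$, where $r\colon (x,i)\mapsto (x+1,i)$, $s\colon (x,i)\mapsto (-x,i)$ and $t\colon (x,i)\mapsto (x,1-i)$; and the automorphism group of the M\"obius ladder is the dihedral group $\langle r,s\rangle \cong D_{2n}$ with $r\colon x\mapsto x+1$ and $s\colon x\mapsto -x$. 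Both facts follow from the observation that the set of rung-edges is canonically determined: in the circular ladder (with $n\ge 5$) it is the unique perfect matching whose removal disconnects $\Lambda$, while in the M\"obius ladder (with $n\ge 4$) it is the unique perfect matching whose removal leaves a Hamilton cycle; in either situation this reduces the automorphism group computation to $\mathrm{Aut}(C_n)$ or $\mathrm{Aut}(C_{2n})$.

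Assume first that $\Lambda$ is the circular ladder, and consider the projection $\pi\colon \mathrm{Aut}(\Lambda)\to D_n$ with kernel $\langle t\rangle$. Vertex-transitivity of $G$ forces $\pi(G)$ to act transitively on $\mathbb{Z}_n$, and the only transitive subgroups of $D_n$ are $C_n$ and $D_n$, each of which contains $r$. Hence $G$ contains either $r$ or $rt$. If $r\in G$, then $r$ is semiregular of order $n\ge 6$, with two orbits of length $n$. If instead $rt\in G$ and $r\notin G$, then $(rt)^2 = r^2\in G$; since $r\notin G$, this forces $n$ to be even (otherwise $\langle r^2\rangle = \langle r\rangle$ would give $r\in G$). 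A direct computation on $\mathbb{Z}_n\times \mathbb{Z}_2$ then shows that $(rt)^k\colon (x,i)\mapsto (x+k, i+k \bmod 2)$ has no fixed point for $0<k<n$, so all orbits of $\langle rt\rangle$ have length $n$, giving a semiregular element of order $n\ge 6$.

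Assume now that $\Lambda$ is the M\"obius ladder, so $G\le D_{2n}$. Since $|G|=2n|G_v|$ with $|G_v|\in\{1,2\}$, we have $|G|\in\{2n,4n\}$. If $|G|=4n$, then $G=D_{2n}$ contains the rotation $r$ of order $2n$, which acts regularly and so is semiregular. If $|G|=2n$, then $G$ acts regularly on $V\Lambda$; the three index-$2$ subgroups of $D_{2n}$ are $\langle r\rangle \cong C_{2n}$, $\langle r^2, s\rangle$, and $\langle r^2, rs\rangle$, of which the second is excluded because $s$ fixes $0\in \mathbb{Z}_{2n}$. In each remaining case $G\supseteq \langle r^2\rangle\cong C_n$, whose generator acts with two orbits of length $n$ on $\mathbb{Z}_{2n}$, hence is semiregular of order $n\ge 6$.

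The only real task is the case analysis of vertex-transitive subgroups inside the two known automorphism groups; the mild obstacle is verifying that when the rotation $r$ itself is missing from $G$, the candidate replacement $rt$ still acts without fixed points on $V\Lambda$, which, as noted above, happens precisely when $n$ is even.
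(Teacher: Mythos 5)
Your M\"obius-ladder analysis is sound, but the circular-ladder half has a genuine gap. The claim that the only transitive subgroups of $D_n$ in its natural action on $\mathbb{Z}_n$ are $C_n$ and $D_n$ is true for $n$ odd but false for $n$ even: the subgroup $\langle r^2,rs\rangle$ of order $n$ is also transitive (its reflections $r^{\mathrm{odd}}s$ interchange the two $\langle r^2\rangle$-orbits) and it does not contain $r$. Hence transitivity of $\pi(G)$ does not let you conclude that $G$ contains $r$ or $rt$, and the overlooked possibility is not a formality: $G=\langle r^2,rs\rangle\times\langle t\rangle$ acts regularly on the $2n$ vertices of the circular ladder, and all of its elements have order dividing $\mathrm{lcm}(n/2,2)$. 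For $n\equiv 2\pmod 4$ the element $r^2t$ is semiregular of order $n$, and for $4\mid n$ with $n\ge 12$ the element $r^2$ is semiregular of order $n/2\ge 6$, so those cases can be repaired; but for $n=8$ this group is $D_4\times C_2$ of exponent $4$ acting regularly on the $16$-vertex prism, so it contains no element of order $\ge 5$ at all. That pair is exactly the $16$-vertex exception recorded in Table~\ref{table:table} ($|G|=16$, full automorphism group of order $32$), so the dichotomy with the bound $10$ cannot be established by any completion of your argument: the case you skipped contains a genuine exception, and one must either handle $\pi(G)=\langle r^2,rs\rangle$ explicitly and enlarge the bound to cover the $16$-vertex prism, or note that in the paper the lemma is only invoked for graphs far larger than this, where the missing case is harmless. (For what it is worth, the paper states this lemma without proof, so there is no author argument to compare against.)

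Two smaller points. The ``canonical matching'' justifications for the automorphism groups are incorrect as stated: for $n$ even the prism has other perfect matchings whose removal disconnects it (alternate rail edges on both rails), and for $n$ odd the M\"obius ladder has other perfect matchings whose complement is a Hamilton cycle; the rungs are instead distinguished by counting the $4$-cycles through an edge ($2$ for a rung, $1$ for a rail, when $n\ge 5$). The conclusions $\mathrm{Aut}(\Lambda)\cong D_n\times C_2$, respectively $D_{2n}$, are nevertheless standard and correct in the range you use, so this defect is one of justification rather than substance; the substantive problem is the missing transitive subgroup described above.
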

\begin{lem}\label{ladder2}
	Unless $\Lambda$ is isomorphic to the skeleton of the cube or the complete graph on $4$ vertices, the automorphism group of a (circular or M\"{o}bius) ladder $\Lambda$ contains $N\leq \mathrm{Aut}(\Lambda)$, a normal cyclic subgroup of order $2$, such that the normal quotient $\Lambda/N$ is a cycle.
\end{lem}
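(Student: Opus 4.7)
The plan is to construct the desired $N$ explicitly as the cyclic group generated by the natural ``rung-swap'' involution of $\Lambda$, and then to verify normality by characterizing the rungs of $\Lambda$ graph-theoretically. Concretely, for a circular ladder $\Lambda = \mathrm{Cay}(\mathbb{Z}_n \times \mathbb{Z}_2, \{(0,1),(1,0),(-1,0)\})$ with $n \geq 3$, $n \neq 4$, take $\tau:(x,i) \mapsto (x,1-i)$; for a M\"obius ladder $\Lambda = \mathrm{Cay}(\mathbb{Z}_{2n}, \{1,-1,n\})$ with $n \geq 3$, take $\tau: x \mapsto x+n$. In each case $\tau$ is an involutory automorphism whose orbits on $V\Lambda$ are precisely the rungs, so $N := \langle \tau \rangle$ is cyclic of order $2$ and $\Lambda/N$ is a cycle of length $n$: each rung collapses to a single vertex, each pair of parallel cycle edges collapses to a single edge, and each rung becomes a loop, which is discarded.

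The substantive step is to show that $N$ is normal in $\mathrm{Aut}(\Lambda)$. The plan is to identify the set $\mathcal{M}$ of rungs as an intrinsic graph invariant of $\Lambda$, so that $\mathrm{Aut}(\Lambda)$ acts on $\mathcal{M}$; the kernel of this action is then automatically normal, and one verifies directly that this kernel equals $N$. For all $n$ sufficiently large, each rung lies in exactly two $4$-cycles of $\Lambda$ while each cycle edge lies in exactly one $4$-cycle, so $\mathcal{M}$ is preserved setwise by every element of $\mathrm{Aut}(\Lambda)$. The identification of the kernel with $\langle\tau\rangle$ follows from a short connectedness argument: any automorphism $\phi$ fixing each rung setwise either fixes both endpoints of one rung, in which case the structure of $\Lambda$ forces $\phi=1$, or swaps both endpoints of one rung, in which case $\phi$ swaps both endpoints of every rung and hence $\phi=\tau$.

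The small remaining cases require ad hoc adaptations. The triangular prism ($n=3$ circular) admits an alternative characterization of its rungs as the edges lying in no triangle, and the above argument then goes through unchanged; for the M\"obius ladders with the few small values of $n$, one can instead invoke the explicit structure $\mathrm{Aut}(\Lambda)\cong D_n\times C_2$ (circular, $n\neq 4$) or $D_{2n}\times C_2$ (M\"obius, $n\geq 4$), in which $\langle \tau\rangle$ is a central direct factor. The main obstacle, and the genuine reason the cube and $K_4$ are excluded, is that their full automorphism groups admit no normal cyclic subgroup of order $2$ whose quotient is a cycle: the cube's only central involution is the antipodal map, which yields the non-cycle quotient $K_4$, while $\mathrm{Aut}(K_4)=S_4$ has trivial center, so no normal $C_2$ exists at all.
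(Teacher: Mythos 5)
The paper states this lemma without proof (both Lemma~\ref{ladder1} and Lemma~\ref{ladder2} are left to the reader), so there is no proof of record to compare against; your strategy --- exhibit the rung-swap $\tau$ explicitly, characterise the rungs combinatorially so that $\mathrm{Aut}(\Lambda)$ permutes them, and identify $\langle\tau\rangle$ with the kernel of that action --- is a perfectly sensible way to supply one, and it works for the circular ladders with $n\neq 4$ and for the M\"obius ladders with $n\geq 4$ (for those cases your 4-cycle count and the connectedness argument identifying the kernel are correct).

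There is, however, a genuine gap: the M\"obius ladder with $n=3$. The paper defines M\"obius ladders for all $n\ge 2$, and $\mathrm{Cay}(\mathbb{Z}_6,\{1,-1,3\})\cong K_{3,3}$, which your argument never covers: $K_{3,3}$ is edge-transitive and every edge lies in four $4$-cycles, so the rung/cycle-edge distinction is not an invariant and the 4-cycle count fails, while your fallback via the explicit automorphism group is only invoked for M\"obius $n\ge 4$. Worse, the omission cannot be repaired as stated: a normal subgroup of order $2$ is central, and $\mathrm{Aut}(K_{3,3})\cong \mathrm{Sym}(3)\,\mathrm{wr}\,C_2$ has trivial centre, so $\mathrm{Aut}(K_{3,3})$ contains no normal cyclic subgroup of order $2$ at all. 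So either $K_{3,3}$ must join the cube and $K_4$ among the exceptions (which is harmless for the way the lemma is used later, since $|V K_{3,3}|=6\le 10$), or ``normal'' must be read relative to a smaller vertex-transitive subgroup; in any case you should flag this case rather than pass over it. A minor further slip: for M\"obius ladders with $n\ge 4$ the full automorphism group is dihedral of order $4n$, which is isomorphic to a direct product (dihedral of order $2n$)$\,\times C_2$ only when $n$ is odd; the claim you actually need --- that $\tau$, being rotation by $n$ of the rim $2n$-cycle, generates the centre of this dihedral group and is therefore normal with quotient a cycle of length $n$ --- is true and is what you should say instead.
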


\begin{remark}\label{remBis}
	Let $\Gamma$ be a connected cubic graph that is neither a circular nor a M\"{o}bius ladder, and let $G\leq \mathrm{Aut}(\Gamma)$ be a vertex-transitive group such that the action of $G_\alpha$ on the neighbourhood of $\alpha$ is cyclic of order $2$. Then~\cite[Lemma~9 and Theorem~10]{PSV.cubicCensus} imply that the merging operator applied to the pair $(\Gamma,G)$ gives a pair $(\Delta,\mathcal{C})$ such that $\Delta$ is $4$-valent, and the action of $G$ on $\Delta$ is faithful, arc-transitive and $\mathcal{C}$-invariant. This result motivates the use of the word \emph{degenerate} when referring to the circular and M\"{o}bius ladders.
\end{remark}

In view of \cite[Theorem~12]{PSV.cubicCensus}, the merging operator is the right-inverse of the splitting one, or, more explicitly, unless $\Gamma$ is a (circular or M\"{o}bius) ladder, splitting a pair $(\Delta,\mathcal{C})$ obtained via the merging operation on $(\Gamma,G)$ results in the starting pair. For our purposes, we need to show that the merging operator is also the left-inverse of the splitting one.
\begin{thm}\label{ms=id}
	Let $\Delta$ be a $4$-valent graph, let $\mathcal{C}$ be a partition of $E\Delta$ into cycles, and let $G\leq \mathrm{Aut}(\Delta)$ be an arc-transitive and $\mathcal{C}$-invariant group. Then the merging operation can be applied to the pair $(\mathrm{s}(\Delta,\mathcal{C}), G)$ and it gives as a result $(\Delta, \mathcal{C})$.
\end{thm}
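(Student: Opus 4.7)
The plan is to verify the hypotheses of the merging operation on $(\mathrm{s}(\Delta,\mathcal{C}),G)$, and then trace through its construction step by step, identifying each stage of the output with the corresponding ingredient of the pair $(\Delta,\mathcal{C})$.

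First I would check that the merging operation is applicable. Since $\Delta$ is connected (which is forced by the existence of an arc-transitive automorphism group on the $4$-valent $\Delta$) and $\mathcal{C}$ covers every edge, $\mathrm{s}(\Delta,\mathcal{C})$ is connected and cubic by construction. The remarks on splitting give $G\le \mathrm{Aut}(\mathrm{s}(\Delta,\mathcal{C}))$, and the arc-transitivity of $G$ on $\Delta$ yields vertex-transitivity on $\mathrm{s}(\Delta,\mathcal{C})$ together with $|G_\alpha:G_{(\alpha,C)}|=2$. What remains is to show that $G_{(\alpha,C)}$ acts on the neighbourhood of $(\alpha,C)$ as a cyclic group of order $2$. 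The three neighbours of $(\alpha,C)$ are $(\alpha,C')$, with $C'$ the other cycle of $\mathcal{C}$ through $\alpha$, together with $(\beta,C)$ and $(\gamma,C)$, where $\beta,\gamma$ are the two neighbours of $\alpha$ on $C$ in $\Delta$. Plainly $G_{(\alpha,C)}$ fixes $(\alpha,C')$, and the $\mathcal{C}$-invariance of $G$ makes $\{\beta,\gamma\}$ a block of $G_\alpha$ in its action on the four neighbours of $\alpha$ in $\Delta$, so $G_{(\alpha,C)}$ stabilises $\{\beta,\gamma\}$ setwise. An orbit-stabiliser count using arc-transitivity shows that $G_{\alpha\beta}$ has index $2$ in $G_{(\alpha,C)}$, so the latter must swap $\beta$ and $\gamma$, producing the desired cyclic action.

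Next I would identify the matching $\mathcal{M}$ produced by the merging. The analysis above shows that the unique neighbour of $(\alpha,C)$ fixed by $G_{(\alpha,C)}$ is $(\alpha,C')$, hence $\mathcal{M}=\{\{(\alpha,C),(\alpha,C')\}\mid \alpha\in V\Delta\}$, and the map sending $\{(\alpha,C),(\alpha,C')\}$ to $\alpha$ is a bijection from $\mathcal{M}$ to $V\Delta$. By the definition of splitting, the edges of $\mathrm{s}(\Delta,\mathcal{C})$ outside $\mathcal{M}$ are exactly the pairs $\{(\alpha,D),(\beta,D)\}$ with $\{\alpha,\beta\}\in E\Delta$ contained in $VD$; consequently two matching edges $e_\alpha,e_\beta$ are at distance $1$ in $\mathrm{s}(\Delta,\mathcal{C})$ precisely when $\{\alpha,\beta\}\in E\Delta$, which shows that the merged graph coincides with $\Delta$ under the above bijection.

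Finally I would match the cycle decomposition. The $2$-factor $\mathcal{F}:=E\mathrm{s}(\Delta,\mathcal{C})\setminus\mathcal{M}$ splits into one connected component for each $C\in\mathcal{C}$, namely the cycle on $\{(\alpha,C)\mid \alpha\in VC\}$ whose consecutive vertices mirror the consecutive vertices of $C$ in $\Delta$. Transporting these components back through the bijection $\mathcal{M}\leftrightarrow V\Delta$, one recovers precisely the partition $\mathcal{C}$ of $E\Delta$ into cycles. I do not anticipate a genuine obstacle: the crux is the verification that $G_{(\alpha,C)}$ acts non-trivially on $\{\beta,\gamma\}$, and that reduces essentially to combining $\mathcal{C}$-invariance with arc-transitivity; the remainder is bookkeeping dictated by the definitions of splitting and merging.
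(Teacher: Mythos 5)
Your argument is correct and takes essentially the same route as the paper's own proof: you verify via arc-transitivity and $\mathcal{C}$-invariance that $G_{(\alpha,C)}$ fixes $(\alpha,C')$ and swaps $(\beta,C)$ and $(\gamma,C)$, so the merging operation applies, and then identify the resulting matching with $\{\{(\alpha,C),(\alpha,C')\}\mid\alpha\in V\Delta\}$, the merged graph with $\Delta$ via the map $\{(\alpha,C),(\alpha,C')\}\mapsto\alpha$, and the components of the $2$-factor with the cycles of $\mathcal{C}$, exactly as in the paper (your index count $|G_{(\alpha,C)}:G_{\alpha\beta}|=2$ is just a slightly more explicit version of the paper's appeal to arc-transitivity). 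The only slip is the parenthetical claim that arc-transitivity forces $\Delta$ to be connected, which is false in general (a disjoint union of isomorphic arc-transitive graphs is arc-transitive), but this is immaterial here and the paper glosses over connectedness as well.
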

\begin{proof}
	Let $(\alpha,C)$ be a generic vertex of $\mathrm{s}(\Delta,\mathcal{C})$, let $D\in \mathcal{C}$ be the other cycle of the partition passing through $\alpha$, and let $\beta,\gamma\in V\Delta$ be the neighbours of $\alpha$ in $C$. Then, using the fact that $G$ is arc-transitive on $C$,
	\[(\alpha,D)^{G_{(\alpha,C)}}=\{(\alpha,D)\} \quad\hbox{and}\quad (\beta,C)^{G_{(\alpha,C)}}=(\gamma,C)^{G_{(\alpha,C)}}=\{(\beta,C), (\gamma,C)\}.\]
	Therefore, for any vertex $(\alpha,C)\in V\mathrm{s}(\Delta,\mathcal{C})$, $G_{(\alpha,C)}$ acts on the neighbourhood of $(\alpha,C)$ as a cyclic group of order $2$. Hence, we can apply the merging operation to the pair $(\mathrm{s}(\Delta,\mathcal{C}), G)$. Furthermore, we deduce that
	\[\mathcal{M}=\{\{(\alpha,C),(\alpha,D)\} \mid \alpha \in VC \cap VD \}\]
	is a complete matching for $(\mathrm{s}(\Delta,\mathcal{C}), G)$. Thus the connected components of the resulting $2$-factor $\mathcal{F}=E\mathrm{s}(\Delta,\mathcal{C}) \setminus \mathcal{M}$ can be identified with the cycles of $\mathcal{C}$. Now, consider the map defined as
	\[\theta: \mathcal{M}\rightarrow V\Delta, \, \{(\alpha,C),(\alpha,D)\} \mapsto \alpha.\]
	Since a generic vertex $\alpha \in V\Delta$ belongs to precisely two distinct cycles, $\theta$ is bijective. Moreover, $\beta$ is adjacent to $\alpha$ in $\Delta$ if, and only if, either $\{(\alpha,C),(\beta,C)\}$ or $\{(\alpha,D),(\beta,D)\}$ is an edge in $\mathrm{s}(\Delta,\mathcal{C})$. In particular, $\theta$ also induces the bijection
	\[ \hat{\theta}: \mathcal{F} \rightarrow E\Delta, \, \{(\alpha,C),(\beta,C)\} \mapsto \{\alpha, \beta\},\]
	which sends the connected components of $\mathcal{F}$ into disjoint cycles of $\mathcal{C}$. This shows that $\theta$ is a graph isomorphism between $\Delta$ and the $4$-valent graph obtained by merging the pair $(\mathrm{s}(\Delta,\mathcal{C}), G)$, and that the resulting cycle partition is isomorphic to $\mathcal{C}$.
\end{proof}
\begin{cor}\label{cor.ms=id}
	Let $\Delta$ be a $4$-valent graph, let $\mathcal{C}$ be a partition of $E\Delta$ into cycles, and let $G\leq \mathrm{Aut}(\Delta)$ be an arc-transitive and $\mathcal{C}$-invariant group (and so $G\leq \mathrm{Aut}(\mathrm{s}(\Delta,\mathcal{C}))$). Suppose that $G\leq A \leq \mathrm{Aut}(\mathrm{s}(\Delta,\mathcal{C}))$ is a vertex-transitive group such that, for any vertex $\alpha \in V\mathrm{s}(\Delta,\mathcal{C})$, the action of $A_\alpha$ on the neighbourhood of $\alpha$ is cyclic of order $2$, then $A\leq \mathrm{Aut}(\Delta)$.
\end{cor}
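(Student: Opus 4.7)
The plan is to apply the merging operation of Section~\ref{split} to the pair $(\mathrm{s}(\Delta,\mathcal{C}),A)$ and to argue that the output is again $(\Delta,\mathcal{C})$, with $A$ acting faithfully on the merged graph. Since by hypothesis $A_\alpha$ acts as a cyclic group of order~$2$ on the neighbourhood of every vertex $\alpha\in V\mathrm{s}(\Delta,\mathcal{C})$, the merging operation is indeed defined on $(\mathrm{s}(\Delta,\mathcal{C}),A)$.

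The key step is to show that the complete matching $\mathcal{M}_A$ produced by merging $(\mathrm{s}(\Delta,\mathcal{C}),A)$ coincides with the matching $\mathcal{M}_G=\{\{(\alpha,C),(\alpha,D)\}\mid \alpha\in VC\cap VD\}$ produced by merging $(\mathrm{s}(\Delta,\mathcal{C}),G)$ in the proof of Theorem~\ref{ms=id}. Fix a vertex $(\alpha,C)\in V\mathrm{s}(\Delta,\mathcal{C})$ and let $D$ be the other cycle of $\mathcal{C}$ through $\alpha$. The proof of Theorem~\ref{ms=id} shows that $G_{(\alpha,C)}$ induces a cyclic group of order~$2$ on the neighbourhood of $(\alpha,C)$ whose unique fixed neighbour is $(\alpha,D)$. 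Since $G_{(\alpha,C)}\le A_{(\alpha,C)}$ and $A_{(\alpha,C)}$ also induces a cyclic group of order~$2$ on that same three-element neighbourhood, the two induced permutation groups must be equal; in particular, the unique neighbour of $(\alpha,C)$ fixed by $A_{(\alpha,C)}$ is again $(\alpha,D)$. Therefore $\mathcal{M}_A=\mathcal{M}_G$, and consequently the resulting $2$-factor $\mathcal{F}=E\mathrm{s}(\Delta,\mathcal{C})\setminus \mathcal{M}_A$ and its decomposition into connected components coincide with those used in the proof of Theorem~\ref{ms=id}.

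Applying Theorem~\ref{ms=id} to this data, the merging of $(\mathrm{s}(\Delta,\mathcal{C}),A)$ yields exactly the pair $(\Delta,\mathcal{C})$, and $A$ acts naturally on the vertex set $V\Delta\cong \mathcal{M}_A$ by permutation, preserving the adjacencies inherited from $\mathrm{s}(\Delta,\mathcal{C})$. To conclude that $A\le \mathrm{Aut}(\Delta)$ in the strict sense (i.e.\ that this induced action is faithful) I would invoke Remark~\ref{remBis}: provided $\mathrm{s}(\Delta,\mathcal{C})$ is neither a circular nor a M\"obius ladder, the cited \cite[Lemma~9 and Theorem~10]{PSV.cubicCensus} guarantee that the action of $A$ on the merged graph is faithful, arc-transitive and $\mathcal{C}$-invariant.

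The only potential obstacle is therefore the degenerate case in which $\mathrm{s}(\Delta,\mathcal{C})$ is a (circular or M\"obius) ladder, where the merged object may fail to be a genuine $4$-valent graph and Remark~\ref{remBis} does not apply directly. In that eventuality one handles the corollary by direct inspection, using Lemmas~\ref{ladder1} and~\ref{ladder2} together with the explicit form of $\mathcal{M}_A$: these ladders are small and their automorphism groups are very restricted, so checking that every element of $A$ permutes the cycles of $\mathcal{C}$ and respects adjacency in $\Delta$ is immediate. I expect this ladder verification to be the only step requiring genuine care; the rest of the argument is a bookkeeping application of Theorem~\ref{ms=id}.
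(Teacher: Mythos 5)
Your argument is essentially the paper's own proof: since $G\le A$ and both stabilizers act on each neighbourhood as a cyclic group of order $2$, the local actions (and hence the matchings) coincide, so merging $(\mathrm{s}(\Delta,\mathcal{C}),A)$ yields the same output as merging $(\mathrm{s}(\Delta,\mathcal{C}),G)$, namely $(\Delta,\mathcal{C})$ by Theorem~\ref{ms=id}, and the conclusion follows from Remark~\ref{remBis}. Your extra ladder discussion is additional caution rather than a different route (and note the ladders form infinite families, so they are not ``small''); the paper does not treat this case separately, since Theorem~\ref{ms=id} already identifies the merged pair with the genuine $4$-valent pair $(\Delta,\mathcal{C})$.
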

\begin{proof}
	Note that, as $G$ is a subgroup of $A$, the actions of $G$ and $A$ on the neighbourhood of any vertex $\alpha$ coincide. In particular, applying the merging operation to the pair $(\mathrm{s}(\Delta,\mathcal{C}),A)$ yields the same result as doing it on the pair $(\mathrm{s}(\Delta,\mathcal{C}),G)$, that is, by Theorem~\ref{ms=id}, in both cases we obtain $(\Delta,\mathcal{C})$. The result follows by Remark~\ref{remBis}.
\end{proof}

\subsection{Split Praeger-Xu graphs}\label{sPX} In this section, we bring together the information of Sections~\ref{PX} and~\ref{split} to define and study the split Praeger-Xu graphs.

All the partitions of the edge set of a Praeger-Xu graph into disjoint cycles were classified in \cite[Section~6]{JajcayPW.cycleStrPrXuGraphs}. Regardless of the choice of the parameters $r$ and $s$, there exists a decomposition into disjoint cycles of length $4$ of the form
\[(x; 0h)\sim (x+ 1; h0) \sim (x; 1h) \sim (x+ 1; h1)\]
for some $x\in \mathbb{Z}_r$, and for some string $h$ in $\mathbb{Z}_2$ of length $s-1$.  We denote this partition by $\mathcal{S}$. Moreover, observe that the only two neighbours of $(x; 0h)$ in the $K$-orbit containing $(x+ 1; h0)$ are $(x+ 1; h1)$ and $(x+ 1; h0)$, and similarly the only two neighbours of $(x+1; h0)$ in the $K$-orbit containing $(x; 0h)$ are $(x ; 1h)$ and $(x; 0h)$. Therefore, $\mathcal{S}$ is the unique decomposition such that each cycle intersects exactly two $K$-orbits.
\begin{de}\label{def:2.4}
	The \emph{split Praeger-Xu graph} $\mathrm{sC}(r,s)$ is the cubic graph obtained from the pair $(\mathrm{C}(r,s), \mathcal{S})$ by applying the splitting operation.
\end{de}
\begin{lem}\label{aux2}
	For some positive integers $r\ge 3$ and $s\le r-1$, the automorphism group of the split Praeger-Xu graph is
	\[\mathrm{Aut}(\mathrm{sC}(r,s))=H,\]
	and it acts transitively on $V\mathrm{sC}(r,s)$.
\end{lem}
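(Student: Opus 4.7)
The plan is to prove the two inclusions separately: first, $H\le\mathrm{Aut}(\mathrm{sC}(r,s))$ transitively, by direct construction; second, $\mathrm{Aut}(\mathrm{sC}(r,s))\le H$, by identifying the matching produced by splitting as a graph-theoretically invariant set of edges, and then invoking Corollary~\ref{cor.ms=id} and Lemma~\ref{aux1}.

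For the first inclusion I would start by showing that $H$ preserves the cycle decomposition $\mathcal{S}$. The characterisation given just before Definition~\ref{def:2.4} states that $\mathcal{S}$ is the unique partition of $E\mathrm{C}(r,s)$ into cycles such that each cycle meets exactly two orbits of $K$. Since $H$ normalises $K$, it permutes the set of $K$-orbits and hence preserves this unique partition. Combining this with the $H$-arc-transitivity of $\mathrm{C}(r,s)$ (which holds for $1\le s\le r-1$) and the general fact recorded in Section~\ref{split} that an arc-transitive and $\mathcal{C}$-invariant group of automorphisms of $\Delta$ induces a vertex-transitive action on $\mathrm{s}(\Delta,\mathcal{C})$, we obtain $H\le\mathrm{Aut}(\mathrm{sC}(r,s))$ together with the desired transitivity.

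For the reverse inclusion set $A:=\mathrm{Aut}(\mathrm{sC}(r,s))$ and let $\mathcal{M}$ be the perfect matching arising from the splitting operation on $(\mathrm{C}(r,s),\mathcal{S})$. The key step, and the main obstacle, is to prove that $\mathcal{M}$ is $A$-invariant. I would do this by characterising $\mathcal{M}$ graph-theoretically as the set of edges of $\mathrm{sC}(r,s)$ that lie in no $4$-cycle. Each cycle edge $\{(\alpha,C),(\beta,C)\}$ trivially lies in the $4$-cycle obtained by splitting $C\in\mathcal{S}$; conversely, if a matching edge $\{(\alpha,C),(\alpha,D)\}$ lay in a $4$-cycle, a quick case analysis of the two adjacency types in $\mathrm{sC}(r,s)$ would force the existence of a vertex $\beta\in V\mathrm{C}(r,s)$ adjacent to $\alpha$ in \emph{both} $C$ and $D$, contradicting the fact that distinct cycles of $\mathcal{S}$ through $\alpha$ account for disjoint pairs of edges at $\alpha$ in $\mathrm{C}(r,s)$. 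Hence $A$ fixes $\mathcal{M}$ setwise, so $A_\alpha$ fixes the unique matching neighbour of $\alpha$ and acts on the remaining two neighbours as a subgroup of $\mathrm{Sym}(2)$. Since $|H_\alpha|=2^{r-s}\ge2$, a standard connectedness argument for cubic graphs rules out the trivial action, and so $A_\alpha$ induces exactly the cyclic group of order $2$ on the neighbourhood of $\alpha$.

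At this point Corollary~\ref{cor.ms=id} applies with $\Delta=\mathrm{C}(r,s)$, $\mathcal{C}=\mathcal{S}$ and $G=H$, yielding $A\le\mathrm{Aut}(\mathrm{C}(r,s))$ together with the $\mathcal{S}$-invariance of $A$. For $r\ne4$, Lemma~\ref{aux1} gives $\mathrm{Aut}(\mathrm{C}(r,s))=H$, so $A=H$ and we are done. The three exceptional cases $r=4$ and $s\in\{1,2,3\}$ have $|V\mathrm{sC}(4,s)|\le64$ and are therefore covered by the database-based verification invoked in Remark~\ref{rem}; alternatively, one can check directly that the extra automorphisms of $\mathrm{C}(4,s)$ coming from the discrepancies listed in Lemma~\ref{aux1} fail to preserve $\mathcal{S}$. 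The real content of the proof is the $4$-cycle characterisation of $\mathcal{M}$; once this is in place, everything else is bookkeeping.
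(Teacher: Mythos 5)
Your proof is correct and follows essentially the same route as the paper's: both obtain $H\le\mathrm{Aut}(\mathrm{sC}(r,s))$ from the characterisation of $\mathcal{S}$ as the unique cycle partition whose cycles meet exactly two $K$-orbits, both show the full vertex-stabiliser acts on the neighbourhood as a group of order $2$ by exploiting the $4$-cycle structure (the paper phrases this as a local common-neighbour contradiction, you as invariance of the matching, identified as the set of edges lying on no $4$-cycle), and both then conclude via Corollary~\ref{cor.ms=id} and Lemma~\ref{aux1}. The only divergence is the treatment of $r=4$, where the paper uses a Frattini argument based on the stabiliser equality $\mathrm{Aut}(\mathrm{sC}(4,s))_\alpha=H_\alpha$ and you defer to a direct or computational check of the three small graphs, which is an equally acceptable resolution.
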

\begin{proof}
	Note that $H$ acts on the set of $K$-orbits in $V\mathrm{C}(r,s)$, thus each automorphism of $H$ maps any cycle of $\mathcal{S}$ to a cycle intersecting exactly two $K$-orbits, that is, to an element of $\mathcal{S}$. Thus, $H$ is $\mathcal{S}$-invariant, and so $H\leq \mathrm{Aut}(\mathrm{sC}(r,s))$. We now show the opposite inclusion. Let $\alpha \in V\mathrm{sC}(r,s)$ be a generic vertex, aiming for a contradiction we suppose that $\mathrm{Aut}(\mathrm{sC}(r,s))_\alpha$ does not act on the neighbourhood of $\alpha$ as a cycle of order $2$. Let $\alpha',\beta,\gamma$ be the neighbours of $\alpha$ where $\alpha'$ is fixed by the action of $H_\alpha$, and let $\delta$ be the unique vertex at distance $1$ from both $\beta$ and $\gamma$. Since $H_\alpha \leq \mathrm{Aut}(\mathrm{sC}(r,s))_\alpha$, our hypothesis implies that there exists an element $g \in \mathrm{Aut}(\mathrm{sC}(r,s))_\alpha$ such that $\beta ^g = \alpha '$ and $\gamma^g=\gamma$. This yields a contradiction because $\delta^g$ is ill-defined: in fact there is no vertex of $\mathrm{sC}(r,s)$ at distance $1$ from both $\gamma^g$ and $\delta^g$. Recall that, from Lemma~\ref{aux1}, if $r \neq 4$, then $H= \mathrm{Aut}(\mathrm{C}(r,s))$, and so, by Corollary~\ref{cor.ms=id}, $\mathrm{Aut}(\mathrm{sC}(r,s))\leq H$. On the other hand, if $r=4$, observe that $H$ is vertex-transitive on $\mathrm{sC}(r,s)$ and $\mathrm{Aut}(\mathrm{sC}(r,s))_\alpha = H_\alpha$, hence the equality holds by Frattini's argument.
\end{proof}
\begin{lem}\label{ex:PX}
	Let $G$ be a vertex-transitive subgroup of $\mathrm{Aut}(\mathrm{sC}(r,s))$. Then either $G$ contains a semiregular element of order at least $6$, or $(\mathrm{sC}(r,s),G)$ is one of the examples in Table~$\ref{table:table}$ marked with the symbol $\dagger$.
\end{lem}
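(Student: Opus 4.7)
The overall strategy is to split the argument into two regimes. For $r \ge 6$, I would find an explicit semiregular element of order $r$ or $2r$ inside any vertex-transitive subgroup of $H = \mathrm{Aut}(\mathrm{sC}(r,s))$ (Lemma~\ref{aux2}), while for $r \in \{3,4,5\}$ the graph has at most $5\cdot 2^{5} = 160$ vertices, so the classification of exceptional pairs reduces to the database check of Remark~\ref{rem}.

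For the first regime, the first step is to establish $GK = H$. The normal subgroup $K$ partitions $V\mathrm{sC}(r,s)$ into $2r$ orbits of size $2^{s}$, naturally indexed by a first coordinate in $\mathbb{Z}_r$ together with a ``forward/backward'' label recording which of the two cycles of $\mathcal{S}$ through the vertex is selected; the resulting quotient $\mathrm{sC}(r,s)/K$ is a $2r$-cycle on which $H/K \cong D_r$ acts regularly. Vertex-transitivity of $G$ therefore forces the induced action of $G$ on the $2r$ $K$-orbits to be transitive, whence $|GK/K| = 2r = |H/K|$ and $GK = H$. In particular, $G$ contains an element $g = k\rho$ for some $k = \prod_{i=0}^{r-1}\tau_i^{a_i} \in K$.

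A short computation using $\rho^{-1}\tau_i\rho = \tau_{i+1}$ then yields
\[
g^{r} \;=\; (\tau_0\tau_1\cdots\tau_{r-1})^{c}, \qquad c \equiv \sum_{i=0}^{r-1} a_i \pmod{2},
\]
so $|g|\in\{r,2r\}$. Setting $t := \tau_0\tau_1\cdots\tau_{r-1}$, in the case $|g|=2r$ the power $g^{r}= t$ flips all $s$ coordinate bits of a Praeger-Xu vertex label and is therefore fixed-point-free on $V\mathrm{C}(r,s)$, hence also on $V\mathrm{sC}(r,s)$. Now $g$ projects to $\rho$ on the quotient $2r$-cycle, so every $g$-orbit on $V\mathrm{sC}(r,s)$ has length divisible by $r$; combined with the bound $|g|\le 2r$ and the fixed-point-freeness of $g^{r}$ when $|g|=2r$, this forces every $g$-orbit to have length exactly $|g|$. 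Hence $g$ is a semiregular element of $G$ of order $r$ or $2r$, which is at least $6$ whenever $r\ge 6$.

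For $r\in\{3,4,5\}$ one has $|V\mathrm{sC}(r,s)|\le 5\cdot 2^{5} = 160$, well within the range covered by Remark~\ref{rem}, so any vertex-transitive pair $(\mathrm{sC}(r,s),G)$ without a semiregular element of order at least $6$ is among the $\dagger$-marked rows of Table~\ref{table:table}. The main technical hurdle will be the explicit bookkeeping for the $K$-orbit structure and the verification that $H/K$ acts regularly on the $2r$-cycle $\mathrm{sC}(r,s)/K$; once that picture is in place, the order computation for $g = k\rho$ and the fixed-point-freeness of $t$ are routine manipulations with the string labels defining $V\mathrm{C}(r,s)$.
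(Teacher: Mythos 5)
Your proposal is correct and follows essentially the same route as the paper's proof: reduce via Lemma~\ref{aux2} and transitivity on the $K$-orbits to the presence of an element $k\rho$ in $G$, compute that its $r$-th power is either trivial or the all-bit-flipping element $\tau_0\cdots\tau_{r-1}$ (the super flip), deduce semiregularity of order $r$ or $2r$, and dispose of small parameters through Remark~\ref{rem}. The only differences are cosmetic bookkeeping (your direct exponent-sum computation and orbit-length divisibility argument versus the paper's centralizer argument, and your wholesale computational treatment of $r\le 5$ where the paper notes the order-$2r$ case already works for all $r\ge 3$).
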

\begin{proof}
	From Lemma~\ref{aux2}, we have $G\le H=K\langle \rho,\sigma\rangle$. Observe that $G/G\cap K\cong \langle\rho,\sigma\rangle$, otherwise $G$ is not transitive on the vertices of the split graph $\mathrm{sC}(r,s)$. From this, it follows that $G=V\langle \rho f,\sigma g\rangle$, for some $f,g\in K$, where $V=G\cap K$. Since $\rho$ has order $r$, we get that
	\[\begin{split}
		(\rho f)^r&= \rho f \rho \ldots (\rho f \rho) f
		\\&= \rho f \rho \ldots (\rho^2 \rho^{-1}f \rho) f
		\\&= \rho f \rho \ldots \rho^2 f^\rho f
		\\&= \rho f \rho^{r-1} \ldots f^\rho f
		\\&= f^{\rho^{r-1}} \ldots f^\rho f
	\end{split}\]
	is an element of $V$. Since $V$ is an elementary abelian $2$-group, the element $\rho f$ has order either $r$ or $2r$. Recalling that $V\leq K$,
	\[(\rho f)^r=\prod_{i=0}^{r-1}\tau_i^{a_i}\]
	with $a_i\in \{0,1\}$. Furthermore,
	\[\begin{split}
		(\rho f)^r\rho&= \rho (f \rho \ldots \rho f \rho f \rho)
		\\&=\rho (f f^{\rho} \ldots f^{\rho^{r-2}} f^{\rho^{r-1}})
		\\&=\rho (f^{\rho^{r-1}} \ldots f^\rho f)
		\\&=\rho (\rho f)^r
	\end{split}\]
	thus $\rho$ centralizes $(\rho f)^r$. From this, and from the fact that $\langle \rho\rangle$ acts transitively on $\{\tau_0,\ldots,\tau_{r-1}\}$, we deduce that
	\[(\rho f)^r=\prod_{i=0}^{r-1}\tau_i^{a}\]
	where $a$ is either $0$ or $1$. If $a=0$, then $\rho f$ is a semiregular element of order $r$. In particular, either $r\ge 6$, or the number of vertices of $\mathrm{sC}(r,s)$ is $r2^s$, which is bounded by $5\cdot 2^5=160$, and we finish by Remark~\ref{rem}. On the other hand, if $a=1$, $\rho f$ has order $2r$, and it corresponds to the so-called \emph{super flip} of the Praeger-Xu graph $\mathrm{C}(r,s)$. Since $(\rho f)^r$ does not fix any vertex in $\mathrm{C}(r,s)$, and since the vertex-stabilizers for a split graph has index $2$ in the vertex-stabilizer of the starting graph, for any  vertex $\alpha\in V\mathrm{sC}(r,s)$, we obtain that $(\rho f)^r \notin G_\alpha$. Hence $\rho f$ is semiregular of order $2r\ge 6$.
\end{proof}

To conclude this section, we show a result mimicking Lemma~\ref{lem:aux} for cubic graphs.

\begin{lem}\label{lem:2.5}
	Let $\Gamma$ be a connected cubic vertex-transitive graph, let $G\le\mathrm{Aut}(\Gamma)$ be a vertex-transitive group such that the action of $G_\alpha$ on the neighbourhood of $\alpha$ is cyclic of order $2$, and let $N$ be a minimal normal subgroup of $G$. If $N$ is a $2$-group and $\Gamma/N$ is a cycle of length at least $3$, then $\Gamma$ is isomorphic either to a circular ladder, or to a M\"{o}bius ladder, or to $\mathrm{sC}(r,s)$, for some positive integers $r\ge 3$ and $s \le r-1$.
\end{lem}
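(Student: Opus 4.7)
The plan is to reduce the cubic setting to the $4$-valent one via the merging operation, recognise a Praeger-Xu graph using Lemma~\ref{lem:aux}, and then invert the reduction using the inverse relationship between merging and splitting.

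If $\Gamma$ is a circular or M\"obius ladder, there is nothing to prove, so assume $\Gamma$ is neither. By Remark~\ref{remBis}, applying the merging operation to $(\Gamma,G)$ yields a pair $(\Delta,\mathcal{C})$ in which $\Delta$ is a connected $4$-valent graph and $G\le\mathrm{Aut}(\Delta)$ acts faithfully, arc-transitively, and $\mathcal{C}$-invariantly. Since $G$ acts faithfully on $\Delta$, the subgroup $N$ persists as a nontrivial minimal normal $2$-subgroup of $G$ in this new action.

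The heart of the argument is to check that $\Delta/N$ is a cycle of length at least $3$. Writing $\Delta_0,\ldots,\Delta_{r-1}$ for the $N$-orbits on $V\Gamma$, the $G$-invariant matching $\mathcal{M}$ is preserved by $N$, and for each edge $\{\alpha,\alpha'\}\in\mathcal{M}$ the endpoints lie either in a common $N$-orbit or in two consecutive orbits $\Delta_i,\Delta_{i+1}$. In the first case the $N$-orbit structure transfers to $V\Delta=\mathcal{M}$ and yields a cycle of length $r$ on $V\Delta/N$; in the second case, which forces $r$ to be even, collapsing $\mathcal{M}$ pairs up consecutive orbits and leaves an $r/2$-cycle on $V\Delta/N$. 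In either case, after using Remark~\ref{rem} to dispose of the residual small values of $r$, we obtain a cycle of length at least $3$, and Lemma~\ref{lem:aux} gives $\Delta\cong\mathrm{C}(r',s)$ for some $r'\ge 3$ and $1\le s\le r'-1$.

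It remains to identify $\mathcal{C}$ with the canonical partition $\mathcal{S}$ introduced before Definition~\ref{def:2.4}. Since $\mathcal{S}$ is characterised as the \emph{unique} decomposition of $E\mathrm{C}(r',s)$ in which each cycle meets exactly two $K$-orbits on $V\mathrm{C}(r',s)$, and the $N$-orbits on $V\Delta$ coincide with the $K$-orbits (as $N$ is minimal normal and $K$ is the distinguished normal elementary abelian $2$-subgroup with cycle quotient), it suffices to show that each cycle of $\mathcal{C}$ crosses exactly two $N$-orbits. This follows by tracing the construction of $\mathcal{C}$ as the components of the $2$-factor $\mathcal{F}$ of $\Gamma$ together with the local-$C_2$ constraint on $G_\alpha$. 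Once $\mathcal{C}=\mathcal{S}$ is in hand, the inverse relationship between merging and splitting recalled before Theorem~\ref{ms=id} gives $\Gamma\cong\mathrm{s}(\mathrm{C}(r',s),\mathcal{S})=\mathrm{sC}(r',s)$, as required. The main obstacle will be the identification of $\mathcal{C}$ with $\mathcal{S}$: while the normal-quotient analysis reducing $\Delta$ to a Praeger-Xu graph is essentially parallel to the $4$-valent case treated in Lemma~\ref{lem:aux}, matching the cycle partition requires pinning down the intersection pattern of the components of $\mathcal{F}$ with the $N$-orbits, in the spirit of the case analysis of Lemma~\ref{lemma:auxiliary}.
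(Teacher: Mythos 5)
Your strategy is the same as the paper's (exclude the ladders, merge via Remark~\ref{remBis}, recognise a Praeger--Xu graph through Lemma~\ref{lem:aux}, identify $\mathcal{C}$ with $\mathcal{S}$, and undo the merge), but two of your steps would fail as written. First, the residual small case cannot be disposed of by Remark~\ref{rem}: that remark verifies the dichotomy of Theorem~\ref{thm.main:1.2} for graphs with at most $1\,280$ vertices, whereas the present lemma carries no size hypothesis and its conclusion is a structural statement about $\Gamma$, not a statement about semiregular elements. The only problematic value is $r=4$ in your second case (where $\Delta/N$ would be a $2$-cycle and Lemma~\ref{lem:aux} is unavailable), and it has to be handled directly, as the paper does: there $G$ is a $2$-group, hence $|N|=2$ and $|V\Gamma|=8$, forcing $\Gamma$ to be a ladder, which was already excluded.

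Second, and more seriously, your ``first case'' (the matching edge $\{\alpha,\alpha'\}$ contained in an $N$-orbit) is announced but not actually treated. In that configuration every vertex has exactly one $\mathcal{F}$-neighbour in each of the two adjacent $N$-orbits, so the components of the $2$-factor $\mathcal{F}$ wind around \emph{all} the $N$-orbits; consequently the cycles of $\mathcal{C}$ do not meet exactly two $N$-orbits, and the property you propose to ``trace from the construction'' is false precisely in this case. For the lemma to hold, this configuration must be shown to collapse to a ladder (or to be impossible); your proposal contains no such argument, while the paper implicitly puts itself in the other case once ladders are discarded. Relatedly, your identification of the $N$-orbits on $V\Delta$ with the $K$-orbits is asserted without proof and is not what the argument needs: minimality of $N$ does not by itself force its orbits to be the fibres of $\mathrm{C}(r',s)$. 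What suffices is the containment $N\le K$, so that the $N$-orbits refine the fibres; then a cycle of $\mathcal{C}$ lying in two $N$-orbits meets at most two fibres, and at least two because each fibre is an independent set, giving exactly two and hence $\mathcal{C}=\mathcal{S}$ by the uniqueness property recorded before Definition~\ref{def:2.4}.
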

\begin{proof}
	We already know by Lemma~\ref{ladder2} that both ladders admit a cyclic quotient graph, thus we can suppose that $\Gamma$ is not isomorphic to a circulant ladder or to a M\"{o}bius ladder. By hypothesis, we can apply the merging operator to $(\Gamma,G)$, obtaining the pair $(\Delta, \mathcal{C})$. Since we have excluded the possibility of $\Gamma$ being a ladder, by Remark~\ref{remBis}, $\Delta$ is $4$-valent, and the action of $G$ on $\Delta$ is faithful, arc-transitive and $\mathcal{C}$-invariant.
	Since the action of $N$ cannot map edges in $\mathcal{M}$ to edges in $\mathcal{F}$, the quotient graph $\Gamma/N$ retains a partition into two disjoint sets of edges, namely $\mathcal{M}/N$ and $\mathcal{F}/N$. Moreover, since $\mathcal{M}$ is a complete matching, each edge in $\mathcal{M}/N$ is adjacent to precisely two edges in $\mathcal{F}/N$, and vice versa. This implies that the edges of $\Delta/N$ coincide with the elements of $\mathcal{F}/N$, two of which are adjacent if they share the same neighbour in $\mathcal{M}/N$. If $r\ge 6$, then $\Delta /N$ is a cycle of length $r/2$. From Lemma~\ref{lem:aux}, we deduce that $\Delta$ is isomorphic to $\mathrm{C}(r,s)$, for some positive integers $r\ge 3$ and $s\le r-1$. Observe that, as $\mathcal{C}$ coincides with the connected components of $\mathcal{F}$, each cycle in $\mathcal{C}$ intersects precisely two $K$-orbits. This implies that $\mathcal{C}=\mathcal{S}$, and so~\cite[Theorem~12]{PSV.cubicCensus} yields that $\Gamma$ is isomorphic to \[\mathrm{s}(\Delta,\mathcal{C})=\mathrm{s}(\mathrm{C}(r,s),\mathcal{S})=\mathrm{sC}(r,s). \qedhere\]
	Now, suppose that $r=4$. In this case, we have that $G$ is a $2$-group, hence $|N|=2$ and $|V\Gamma|=8$, and so the only possibility is for $\Gamma$ to be a (cirular or M\"{o}bius) ladder, which we already excluded. 
\end{proof}

	\section{Proof of Theorem~\ref{thm.main:1.2}}

We aim to prove Theorem~\ref{thm.main:1.2} by contradiction. In this section we will assume the following.
\begin{hyp}\label{hyp:3.1}
	Let $\Gamma$ be a connected cubic graph, and let $G\le\mathrm{Aut}(\Gamma)$ such that the pair $(\Gamma, G)$ is a minimal counterexample to Theorem \ref{thm.main:1.2}, first with respect to the cardinality of $V\Gamma$, and then to the order of $G$. From Remark~\ref{rem}, we have $|V\Gamma|>1\,280$. Let $\alpha$ be an arbitrary vertex of $\Gamma$. Let $N$ be a minimal normal subgroup of $G$.
\end{hyp}

Since $\Gamma$ is connected, the stabilizer $G_\alpha$ is a $\{2,3\}$-group. More generally, if $\Delta$ is a connected $d$-regular graph, then no prime bigger than $d$ divides the order of a vertex stabilizer (this follows from an elementary connectedness argument, see for instance~\cite[Lemma~3.1]{SpigaSemiregularBurnside} or~\cite[Lemma~3.2]{MarusicScappellatoSemiregular}). Moreover, $G$ must be a $\{2,3,5\}$-group, otherwise we can find derangements of prime order at least $7$, hence semiregular elements.

Since $N$ is a minimal normal subgroup of $G$, $N$ is a direct product of simple groups, any two of which are isomorphic. Clearly, $N$ is a $\{2,3,5\}$-group, and $N_\alpha$ is a $\{2,3\}$-group. Thus $N$ is a direct product $S^l$, for some positive integer $l$ and for some simple $\{2,3,5\}$-group $S$. Using the Classification of Finite Simple Groups, we see that the collection of simple $\{2,3,5\}$-groups consists of
\[C_2,\, C_3,\, C_5,\, \mathrm{Alt}(5),\, \mathrm{Alt}(6),\, \mathrm{PSp}(4,3),\]
see for instance~\cite{MR338153}.

\begin{lem}\label{lem:3.2}
	Under Hypothesis~$\ref{hyp:3.1}$, if $N_\alpha$ is a $2$-group, then $N$ is an elementary abelian $p$-group, for some prime $p\in \{2,3,5\}$.
\end{lem}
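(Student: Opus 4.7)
The plan is to rule out each non-abelian option for $S$ in the decomposition $N = S^l$ provided by the Classification of Finite Simple Groups, so that the only surviving possibilities are the abelian simple groups $C_2, C_3, C_5$, which give the desired elementary abelian $N$. The key recurring observation is that the hypothesis $N_\alpha$ is a $2$-group, combined with Lemma~\ref{lem:2.1}, guarantees that every $3$-element and every $5$-element of $N$ is semiregular on $V\Gamma$; so exhibiting any such element of order at least $6$ in $G$ will contradict the minimal counterexample choice.

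First I would handle the case $l \ge 2$ with $S$ non-abelian. Each of $\mathrm{Alt}(5)$, $\mathrm{Alt}(6)$ and $\mathrm{PSp}(4,3)$ contains elements of order $3$ and elements of order $5$, so placing such elements in two distinct direct factors of $N = S^l$ produces commuting semiregular elements of orders $3$ and $5$; Lemma~\ref{lem:2.2} then yields a semiregular element of order $15$, a contradiction. Hence $l = 1$ in any non-abelian candidate.

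Next, if $S = \mathrm{PSp}(4,3)$, this group contains an element of order $9$ (its Sylow $3$-subgroup has exponent $9$). Such an element is a $3$-element of $N$, hence semiregular by Lemma~\ref{lem:2.1}, with order $9 \ge 6$, again contradicting minimality. Therefore the only remaining candidates to be excluded are $N = \mathrm{Alt}(5)$ and $N = \mathrm{Alt}(6)$.

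The main obstacle is this last case, because the exponents of $\mathrm{Alt}(5)$ and $\mathrm{Alt}(6)$ are at most $5$ and no semiregular element of order at least $6$ can come from within $N$ alone. My plan is to exploit the size bound $|V\Gamma| > 1\,280 > |N|$, which forces $N$ to be intransitive on $V\Gamma$, and then to pass to the normal quotient $\Gamma/N$ and the induced action of $G/K$, where $K$ is the kernel of the action of $G$ on $N$-orbits. Applying the minimality of $(\Gamma, G)$ to this strictly smaller pair, together with Lemmas~\ref{lemma:auxiliary} and~\ref{lem:2.5} to handle potential drops in the valency of $\Gamma/N$, one should produce a semiregular element of $G/K$ of order at least $6$, which, after a careful lifting argument combining elements of $C_G(N)$ with the persisting semiregular $3$- and $5$-elements of $N$, gives the required semiregular element of $G$. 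The delicate technical point is controlling this lifting step — in particular, ensuring that unfaithfulness of $G/K$ on $\Gamma/N$ does not obstruct it and that the lifted element remains semiregular on all of $V\Gamma$.
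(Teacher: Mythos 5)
Your handling of the cases $l\ge 2$ (commuting semiregular $3$- and $5$-elements in distinct factors, giving order $15$ via Lemma~\ref{lem:2.2}) and $N=\mathrm{PSp}(4,3)$ (a semiregular element of order $9$ via Lemma~\ref{lem:2.1}) coincides with the paper. The genuine gap is in the remaining case $N\in\{\mathrm{Alt}(5),\mathrm{Alt}(6)\}$. Your plan is to pass to $\Gamma/N$, invoke minimality of $(\Gamma,G)$ on the smaller pair, and then ``lift'' a semiregular element of order at least $6$ from the quotient back to $G$. But this lifting is precisely the obstruction that the whole paper is organized around: a semiregular element of $G/K$ of order $\ge 6$ need not have any semiregular preimage of order $\ge 6$ in $G$ (the split Praeger--Xu graphs, whose cycle quotients carry large semiregular rotations while the graphs themselves are exceptions in Table~\ref{table:table}, show this failure concretely). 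You acknowledge the step as ``delicate'' but give no argument for it, and in the hard subcase where $N$ is the unique minimal normal subgroup one has ${\bf C}_G(N)=1$, so your proposed device of ``combining elements of $C_G(N)$ with the semiregular $3$- and $5$-elements of $N$'' has nothing to combine. Moreover, the minimality dichotomy only says that $(\Gamma/N,\bar G)$ either has a semiregular element of order $\ge 6$ \emph{or} lies in Table~\ref{table:table}; you never address the second branch, and the table contains large entries (including an infinite family), so it does not by itself bound $|V\Gamma|$.

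For contrast, the paper avoids quotienting by $N$ altogether in this case. It first shows $N$ is the unique minimal normal subgroup of $G$: a second minimal normal subgroup $M$ would either be semiregular (if $\Gamma/M$ is cubic), giving with the semiregular $5$-elements of $N$ a commuting product of order $\ge 10$, or — if $\Gamma/M$ has smaller valency — the soluble image forces $N$ to fix every $M$-orbit, which rules out $M$ abelian and then produces commuting semiregular elements of orders $3$ and $p\ge 5$, again contradicting Lemma~\ref{lem:2.2}'s conclusion. This reduces to the finitely many almost simple groups with $\mathrm{Alt}(5)\le G\le\mathrm{Sym}(5)$ or $\mathrm{Alt}(6)\le G\le\mathrm{Aut}(\mathrm{Alt}(6))$, where a direct computation shows any cubic vertex-transitive action without semiregular elements of order $\ge 6$ has at most $360$ vertices, contradicting $|V\Gamma|>1\,280$. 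If you want to salvage your route, you would need either to prove a lifting criterion (for example, that $K$ acts semiregularly so that orders of semiregular elements are controlled under the quotient map) or to replace the quotient step by the uniqueness-of-socle argument above; as written, the final case is not proved.
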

\begin{proof}
	If $N$ is abelian, then there is nothing to prove. Thus, suppose that $N=S^l$, where $S\in \{\mathrm{Alt}(5), \mathrm{Alt}(6), \mathrm{PSp}(4,3)\}$ and $l\ge 1$.
	
	Assume $l\geq 2$. Let $S$ and $T$ be two distinct direct factors of $N$. Then $S_   \alpha$ and $T_\alpha$ are $2$-groups,  because so is $N_\alpha$. Thus, by Lemma~\ref{lem:2.1}, all the $3$- and $5$-elements of $S$ and $T$ are semiregular. Applying Lemma~\ref{lem:2.2}, we obtain that $S\times T$, contains a semiregular element of order $15$. Thus $G$ contains a semiregular element of order exceeding $6$, contradicting Hypothesis~\ref{hyp:3.1}.
	
	Assume $l=1$. If $N=\mathrm{PSp}(4,3)$, then Lemma~\ref{lem:2.1} implies that the $3$-elements in $N$ are semiregular. As $\mathrm{PSp}(4,3)$ contains elements of order $9$, $G$ contains a semiregular element of order $9$, contradicting Hypothesis~\ref{hyp:3.1}. Thus, $N$ is either $\mathrm{Alt}(5)$ or $\mathrm{Alt}(6)$.
	
	We claim that $G$ is almost simple, that is, $N$ is the unique minimal normal subgroup of $G$. Aiming for a contradiction, let $M$ be a minimal normal subgroup of $G$ distinct from $N$. If $\Gamma/M$ is a cubic graph, then $M_\alpha =1$, and hence each element of $M$ is semiregular. Since $[N,M]=1$, by Lemma~\ref{lem:2.2},  $G$ contains a semiregular element of order at least $10$, against Hypothesis~\ref{hyp:3.1}. On the other hand, suppose that $\Gamma/M$ is not cubic. Regardless of the valency of $\Gamma/M$, the group that $G$ induces in its action on the vertices of $\Gamma/M$ is a subgroup of a dihedral group, hence it is a soluble group. In particular, as $N$ is a non-abelian simple group, $N$ acts trivially on the vertices of $\Gamma/M$. This means that $N$ fixes setwise each $M$-orbit. If $M$ is abelian, then $M$ acts regularly on each of its orbits. However, as $N$ commutes with $M$ and fixes each $M$-orbit, this contradicts the fact that $N$ is non-abelian.\footnote{Recall that, if $X\le\mathrm{Sym}(\Omega)$ is an abelian group and $X$ acts regularly on $\Omega$, then $X={\bf C}_{\mathrm{Sym}(\Omega)}(X)$.}
	Therefore, $M$ is not abelian. 
	In particular, there is a prime $p\geq 5$ that divides the order of $M$, and the elements of $M$ of order $p$ are semiregular. As before, applying Lemma \ref{lem:2.2}, we get that $NM$ contains a semiregular element of order $3p$, a contradiction. We conclude that $N$ is the unique minimal normal subgroup of $G$.
	
	Notice that $\mathrm{Alt}(5)\le G\le\mathrm{Sym}(5)$ or $\mathrm{Alt}(6)\le G\le\mathrm{Aut}(\mathrm{Alt}(6))$. A computer computation in each of these cases shows that, if $G\le\mathrm{Aut}(\Gamma)$ has no semiregular elements of order at least $6$, then $|V\Gamma|\in \{30,60, 90,180,360\}$, which contradicts Hypothesis~\ref{hyp:3.1}.
\end{proof}

From here on, we divide the proof in five cases:
\begin{itemize}
	\item $G_\alpha=1$;
	\item $G_\alpha\ne 1$ and $N$ is transitive on $V\Gamma$;
	\item $G_\alpha\neq 1$ and $N$ has two orbits on $V\Gamma$;
	\item $G_\alpha\ne 1$ and $\Gamma/N$ is a cycle of length at least $3$;
	\item $G_\alpha\ne 1$ and  $\Gamma/N$ is a cubic graph.
\end{itemize}

\subsection{$G_\alpha=1$}\label{sec:0}In this case $\Gamma$ is a Cayley graph over $G$. This means that there exists an inverse-closed subset $I$ of $G$ with $\Gamma\cong\mathrm{Cay}(G,I)$. We recall that $\mathrm{Cay}(G,I)$ is the graph having vertex set $G$ where two vertices $x$ and $y$ are declared to be adjacent if and only if $yx^{-1}\in I$. Since $\Gamma$ has valency $3$, we have $|I|=3$. Moreover, since $\Gamma$ is connected, we have $G=\langle I\rangle$. In particular, $G$ is generated by at most $3$ elements. More precisely, either $I$ consists of three involutions or  $I$ consists of  an involution and an element of order greater than $2$ together with its inverse.

In what follows we say that a finite group $X$ satisfies $\mathcal{P}$ if $X$ is generated by either three involutions, or by an involution and by an element of order greater than $2$. In particular, $G$ satisfies $\mathcal{P}$.

Since each element of $G$ is semiregular and since $G$ has no semiregular elements of order at least $6$, we deduce that each element of $G$ has order at most $5$. As customary, we let $$\omega(G):=\{o(g)\mid g\in G\}$$ be the spectrum of $G$. Observe that $$\{1,2\}\subseteq \omega(G)\subseteq \{1,2,3,4,5\}.$$ Since $G$ is generated by at most $3$ elements, we deduce from Zelmanov's solution of the restricted Burnside problem that $|G|$ is bounded above by an absolute constant. We divide the proof depending on $\omega(G)$.

Assume $\omega(G)=\{1,2\}$. In this case, $G$ is elementary abelian and, since $G$ is generated by at most $3$ elements, we deduce $|G|\le 8$, which contradicts Hypothesis~\ref{hyp:3.1}.

Assume $\omega(G)=\{1,2,4\}$. Here, either  $G$ is generated by an element of order $2$ and an element of order $4$, or $G$ is generated by three involutions. We resolve these two cases with a computer computation. Suppose first that $G$ is generated by an involution and by an element of order $4$. We have  constructed the free group $F:=\langle x,y\rangle$ and we have constructed the set $W$ of words in $x,y$ of length at most $6$. Then, we have constructed the finitely presented group $\bar F:=\langle F|x^2,\{w^4:w\in W\}\rangle$. We use the ``bar'' notation for the projection of $F$ onto $\bar F$. Now, $\bar x$ has order $2$ and $\bar y$ has order $4$. Furthermore, each element of $\bar F$ that can be written as a word in $\bar x$ and $\bar y$ of length at most $6$ has order at most $4$. (The number $6$ was chosen arbitrarily but large enough to guarantee to get an upper limit on the cardinality of $G$.) A computer computation shows that $\bar F$ has order $64$ and  exponent $4$. This proves that the largest group of exponent $4$ and generated by an involution and by an element of order $4$ has order $64$. Now, $G$ is a quotient of $\bar F$ and hence $|G|\le |\bar F|\le 64$, which contradicts Hypothesis~\ref{hyp:3.1}. 
Next, suppose that $G$ is generated by three involutions. The argument here is very similar. We have considered the free group $F=\langle x,y,z\rangle,$ and we have considered the set $W$ of words in $x,y,z$ of length at most $6$. We have verified that $\langle F|x^2,y^2,z^2,\{w^4:w\in W\}\rangle$ has order  $1024$ and exponent $4$. This shows that $|G|\le 1\,024$, which contradicts Hypothesis~\ref{hyp:3.1}.

Assume $\omega(G)=\{1,2,3\}$. The groups having spectrum $\{1,2,3\}$ are classified in~\cite{MR1575072}. Routine computations in the list of groups $X$ classified in ~\cite[Theorem]{MR1575072} show that, if $X$ satisfies $\mathcal{P},$ then $|X|\le 18$, which contradicts Hypothesis~\ref{hyp:3.1}.

Assume $\omega(G)=\{1,2,5\}$. The groups having spectrum $\{1,2,5\}$ are classified in~\cite{MR562004}. As above, since $G$ satisfies $\mathcal{P}$, we deduce from a case-by-case analysis in the groups appearing in~\cite{MR562004} that $|G|\le 80$, which contradicts Hypothesis~\ref{hyp:3.1}.

Assume $\omega(G)=\{1,2,3,4\}$. The groups having spectrum $\{1,2,3,4\}$ are classified in~\cite{MR1132578}.  As above, since $G$ satisfies $\mathcal{P}$, we deduce from a case-by-case analysis in the groups appearing in~\cite[Theorem]{MR1132578} that $|G|\le 96$, which contradicts Hypothesis~\ref{hyp:3.1}.

Assume $\omega(G)=\{1,2,4,5\}$. The groups having spectrum $\{1,2,4,5\}$ are classified in~\cite{MR1711942}. This case is sligthly more involved and hence we do give more details. We have three cases to consider: 
\begin{enumerate}
	\item\label{case1} $G=T\rtimes D$ where $T$ is a non-trivial elementary abelian normal $2$-subgroup and $D$ is a non-abelian group of order $10$,
	\item\label{case2} $G=F\rtimes T$ where $F$ is an elementary abelian normal $5$-subgroup and $T$ is isomorphic to a subgroup of a quaternion group of order $8$,
	\item\label{case3} $G$ contains a normal $2$-subgroup $T$ which is nilpotent of class at most $6$ such that $G/T$ is a $5$-group.
\end{enumerate}

Suppose that~\eqref{case1} holds. Clearly, $D$ is the dihedral group of order $10$ and $T$ is a module for $D$ over the field $\mathbb{F}_2$ of cardinality $2$. The dihedral group $D$ has two irreducible modules over $\mathbb{F}_2$ up to equivalence: the trivial module and a $4$-dimensional module $W$. Since $G$ has no elements of order $10$, we deduce $V\cong W^\ell$, for some $\ell\ge 1$. We have verified with a computer computation that $W^3\rtimes D$ does not satisfy $\mathcal{P}$ and hence $G\cong W^\ell\rtimes D$ with $\ell\le 2$. We deduce that $|G|=|V\Gamma|\in \{10\cdot 16,10\cdot 16^2\}=\{160,2\,560\}$. From Hypothesis~\ref{hyp:3.1}, we have $|V\Gamma|>1\,280$ and hence $G\cong W^2\rtimes D$. We have constructed all connected cubic Cayley graphs over $W^2\rtimes D$ and we have found only one (up to isomorphism), therefore we obtain the example in Table~\ref{table:table}.

Suppose that~\eqref{case2} holds. Since $G$ satisfies $\mathcal{P}$, while the quaternion group of order $8$ does not, we deduce that $T$ is cyclic of order $4$. Thus $G=F\rtimes\langle x\rangle$, for some $x$ having order $4$. As $G$ satisfies $\mathcal{P}$, this means that $G=\langle x,y\rangle$, for some involution $y$. Clearly, $y=fx^2$ for some $f\in F$. As $G=\langle x,y\rangle=\langle x,fx^2\rangle=\langle x,f\rangle$, we have $F=\langle f,f^x,f^{x^2},f^{x^3}\rangle$.
Since $y=fx^2$ has order $2$ and $x$ has order $4$, we deduce 
$$1=y^2=fx^2fx^2=ff^{x^2},$$
that is,
$f^{x^2}=f^{-1}$. Now, $F=\langle f,f^x,f^{x^2},f^{x^3}\rangle=\langle f,f^x,f^{-1},(f^x)^{-1}\rangle=\langle f,f^x\rangle$. Thus $|F|\le 25$ and hence $|G|\le 100$, which contradicts Hypothesis~\ref{hyp:3.1}.

Suppose that~\eqref{case3} holds. Since $G$ satisfies $\mathcal{P}$, we deduce that $G/T$ is cyclic of order $5$. Thus $G=T\rtimes\langle x\rangle$, for some $x$ having order $5$. This means that $G=\langle x,y\rangle$, for some involution $y$. Clearly, $y\in T$. From Hypothesis~\ref{hyp:3.1}, we have  $|G|=|V\Gamma|> 1\, 280$.  Let $N$ be a minimal normal subgroup of $G$. We have $N\le T$ and $N$ is an irreducible $\mathbb{F}_2\langle x\rangle$-module. The cyclic group of order $5$ has two irreducible modules over $\mathbb{F}_2$ up to equivalence: the trivial module and a $4$-dimensional module. Since $G$ has no elements of order $10$, $x$ does not centralize $N$ and hence $N$ is the irreducible $4$-dimensional module for the cyclic group of order $5$. In particular, $|N|=2^4$. Consider $\bar{G}:=G/N$. Now, $$\{1,2,5\}\subseteq \omega(\bar{G})\subseteq \omega(G)=\{1,2,4,5\}.$$ Assume $\omega(\bar{G})=\{1,2,5\}$. From the discussion above (regarding the finite groups having spectrum $\{1,2,5\}$ and satisfying $\mathcal{P}$), we have $|\bar{G}|\le 80$ and hence $|G|=|G:N||N|\le 80\cdot 16=1\,280$, which is a contradiction. Therefore, $\omega(\bar{G})=\{1,2,4,5\}$. Since $(\Gamma,G)$ was chosen minimal in Hypothesis~\ref{hyp:3.1}, we have $|\bar{G}|\le 1\,280$. Therefore $(\Gamma/N,\bar{G})$ appears in Table~\ref{table:table}. An inspection on the groups appearing in this table shows that there is only one group having spectrum $\{1,2,4,5\}$ and is the group of order $1\,280$. Thus we know precisely $\bar{G}$.
Now, the group $G$ is an extension of $\bar{G}$ by $N$ and hence it can be computed with the cohomology package in the computer algebra system magma. We have computed all the extensions $E$ of $\bar{G}$ via $N$ and we have verified that none of the extensions $E$ has the property that $\omega(E)=\{1,2,4,5\}$ and with $E$ satisfying $\mathcal{P}$.

Assume $\omega(G)=\{1,2,3,5\}$. The groups having spectrum $\{1,2,3,5\}$ are classified in~\cite{Mazurov1}. We deduce from~\cite{Mazurov1} that $G\cong A_5$, which contradicts Hypothesis~\ref{hyp:3.1}.

Assume $\omega(G)=\{1,2,3,4,5\}$. The groups having spectrum $\{1,2,3,4,5\}$ are classified in~\cite{MR1132578}. We deduce from~\cite[Theorem]{MR1132578} that either $G\cong A_6$ or $G\cong V^\ell\rtimes A_5$ where $V$ is a $4$-dimensional natural module over the finite field of size $2$ for $A_5\cong \mathrm{SL}_2(4)$ and $\ell\ge 1$. The group $V^2\rtimes A_5$ does not satisfy $\mathcal{P}$ (this can be verified with a computer computation). Therefore, either $G\cong A_6$ or $G\cong V\rtimes A_5$. Thus $|G|=|V\Gamma|\le 960$, which contradicts Hypothesis~\ref{hyp:3.1}.

\subsection{$G_\alpha\ne 1$ and $N$ is transitive on $V\Gamma$.}\label{sec:1}
By Hypothesis~\ref{hyp:3.1}, $(\Gamma,G)$ is a minimal counterexample. This minimality and the fact that $N$ is transitive on $V\Gamma$ imply that $G=N$. As $N$ is a minimal normal subgroup of $G$,  $G$ is simple. Thus $G\in \{\mathrm{Alt}(5),\mathrm{Alt}(6),\mathrm{PSp}(4,3)\}$. A computer computation in each of these cases shows that, if $G\le\mathrm{Aut}(\Gamma)$ has no semiregular elements of order at least $6$, then $|V\Gamma|\in \{10,20, 30,60, 90,180,360\}$, which contradicts Hypothesis~\ref{hyp:3.1}.

\subsection{$G_\alpha\ne 1$ and $N$ has two orbits on $V\Gamma$.}

Suppose $N$ is abelian. By~\cite[Lemma 1.15]{PS.fixedVertex}, either $\Gamma$ is complete bipartite, or $\Gamma$ is a bi-Cayley graph over $N$ and the minimal number of generators of $N$ is at most $4$. (Here, it is not really relevent to introduce the definition of bi-Cayley graph, however, what is really relevant is the fact that $N$ is generated by at most $4$ elements.) Recalling that $N$ is a $\{2,3,5\}$-group, it follows that $|V\Gamma|=2|N|\leq 2\cdot 5^4=1\,250$, and the equality is realized for $N=C_5^4$. In particular, this contradicts Hypothesis~\ref{hyp:3.1}.

Suppose $N$ is not abelian. By Lemma~\ref{lem:3.2},  $3$ divides the order of $N_\alpha$. A fortiori, $3$ divides the order of $G_\alpha$, hence $G$ acts arc-transitively on $\Gamma$. We can extract information on the local action of $G$ by consulting the amalgams in \cite[Section 4]{DjokovicMillerAmalgams}. In particular, with a direct inspection (on a case-by-case basis) on these amalgams, it can be verified that, for any edge $\{\alpha,\beta\}$ of $\Gamma$, $G$ contains an element $y$ that swaps $\alpha$ and $\beta$ and its order is either $2$ or $4$. As $\alpha$ and $\beta$ belong to distinct $N$-orbits, $y$ maps $\alpha^N$ to $\beta^N$. Moreover, as $N$ has two orbits on $V\Gamma$, the subgroup $N\langle y \rangle$ is vertex-transitive on $\Gamma$. Therefore, by minimality of $G$, we have $G=N\langle y \rangle$.

Assume $o(y)=2$. Thus $|G:N|=2$. As $N=S^l$ is a minimal normal subgroup of $G$, $l\in\{1,2\}$. If $l=1$, then $G$ is an almost simple group whose socle is either $\mathrm{Alt}(5)$, $\mathrm{Alt}(6)$ or $\mathrm{PSp(4,3)}$. A computer computation shows that $(\Gamma,G)$ satisfies Theorem~\ref{thm.main:1.2}, a contradiction. If $l=2$, then $\langle y\rangle$ permutes transitively the two simple direct factors of $N$. Let $s\in N$ be a $5$-element in a simple direct factor of $N$, and notice that $t:=s^y$ is a $5$-element in the other simple direct factor of $N$. Thus $[s,t]=1$. We claim that $ys$ is a semiregular element of order $10$. We get
\[ (ys)^2=ysys=ts \in N,\]
\[ (ys)^5=ysysysysys=ys(ts)^2 \in yN.\]
We have that $(ys)^2$ is a $5$-element in $N$, thus semiregular, and that $(ys)^5$ has order $2$ and, being an element of $yN=Ny$, it has no fixed points, hence it is semiregular. Therefore $ys$ is a semiregular element of order $10$, contradicting Hypothesis~\ref{hyp:3.1}.

Assume $o(y)=4$. As $|G:N|=4$ and $N$ is a minimal normal subgroup of $G$, $l\in \{1,2,4\}$. Observe that a Sylow $3$-subgroup of $G_\alpha$ has order $3$, because $\Gamma$ is cubic and $G$ is arc-transitive. Let $x\in G_\alpha$ be an element of order $3$. As $|G:N|=4$, we have $x\in N\cap G_\alpha=N_\alpha\le S^l$. In particular, we may write $x=(s_1,\ldots,s_l)$, with $s_i\in S$. Let $\kappa$ be the number of coordinates of $x$ different from $1$, we call $\kappa$ the type of $x$. Since $\langle x\rangle$ is a Sylow $3$-subgroup of $G_\alpha$, from Sylow's theorem, we deduce that each element of order $3$ in $G$ fixing some vertex of $\Gamma$ has type $\kappa$. Let $s\in S$ be an element of order $3$ and let $t\in S$ be an element of order $5$. Suppose $l=4$. If $\kappa\ne 1$, then $g=(s,t,1,1)$ has order $15$ and is semiregular because $g^5=(s^5,1,1,1)$ has order $3$ but it is not of type $\kappa$. Similarly, if $l=4$ and $k=1$, then $g=(s,s,t,1)$ has order $15$ and is semiregular. Analogously, when $l=2$, if $\kappa\ne 1$, then $g=(s,t)$ has order $15$ and is semiregular. When $l=2$, $\kappa=1$ and $S=\mathrm{PSp}(4,3)$, the group $S$ contains an element $r$ having order $9$ and hence $g=(r,r)$ is a semiregular element having order $9$. Summing up, from these reductions, we may suppose that either $l=1$, or $l=2$ and $S\in \{\mathrm{Alt}(5),\mathrm{Alt}(6)\}$. These cases can be dealt with a computer computation: indeed, the invaluable help of a computer shows that no counterexample to Theorem~\ref{thm.main:1.2} arises.

\subsection{$G_\alpha\ne 1$ and $\Gamma/N$ is a cycle of length $r\geq 3$.}The full automorphism group of $\Gamma/N$ is the dihedral group of order $2r$.
Let $K$ be the kernel of the action of $G$ on the $N$-orbits. The quotient $G/K$ acts faithfully on $\Gamma/N$, that is, it is a transitive subgroup of the dihedral group of order $2r$.

We claim that 
\begin{equation}\label{eq:O1}
	G/K\hbox{ is regular in its action on the vertices of }\Gamma/N.\end{equation} Assume $G/K$ acts on the vertices of  $\Gamma/N$ transitively but not regularly. In particular, $G/K$ is isomorphic to the dihedral group of order $2r$. Thus $G$ has an index $2$ subgroup $M$ such that $M$ is vertex-transitive and $M/K$ is isomorphic to the cyclic group of order $r$. By minimality of $G$, we have $G=M$, which goes against the choice of $M$. Hence $G/K$ is regular. In particular, either $G/K$ is isomorphic to the cyclic group of order $r$, or $r$ is even and $G$ is isomorphic to the dihedral group of order $r$. Later in this proof we resolve this ambiguity and we prove that $r$ is even and $G/K$ is dihedral of order $r$, see~\eqref{eq:OOOO1}.

As $G/K$ acts regularly on the vertices of $\Gamma/N$, we have
\[1_{G/K} = \left(\frac{G}{K}\right)_{\alpha^N} = \frac{G_\alpha K}{K}.\]
Therefore 
\begin{equation}\label{eq:bastaporcoboia}K_\alpha=K\cap G_\alpha=G_\alpha.\end{equation}	

Assume $G$ is arc-transitive. Let $\beta$ be a neighbour of $\alpha$ and observe that $\alpha^N\ne \beta^N$. Since $\Gamma$ is connected, we have
\[ G = \langle G_\alpha, G_{\{\alpha,\beta\}} \rangle = \langle K_\alpha, G_{\{\alpha,\beta\}} \rangle\le \langle K,G_{\{\alpha,\beta\}}\rangle = KG_{\{\alpha,\beta\}}, \]
and hence $G=KG_{\{\alpha,\beta\}}$.
Recalling that $K$ fixes all the $N$-orbits,
\[ |G:K| =|KG_{\{\alpha,\beta\}}:K|= |G_{\{\alpha,\beta\}}:K_{\{\alpha,\beta\}}| = |G_{\{\alpha,\beta\}}:G_{\alpha\beta}| = 2.\]
Thus $G/K\cong C_2$ and $r=2$, which is  a contradiction. Therefore
\begin{center}$G$ is not arc-transitive.\end{center}

This implies that $G_\alpha$ does not act transitively on the neighbourhood of $\alpha$, hence $G_\alpha$ is a $2$-group. By~\eqref{eq:bastaporcoboia}, we deduce $G_\alpha=K_\alpha$ is a $2$-group. Actually, Lemma~\ref{lemma:auxiliary} shows that 
\begin{equation}\label{eq:O2}G_\alpha=K_\alpha\hbox{ is an elementary abelian }2\hbox{-group}.\end{equation}

If $N$ is an elementary abelian $2$-group, then, by Lemma~\ref{lem:2.5}, $\Gamma$ is either a circular ladder, or a M\"{o}bius ladder, or a split Praeger-Xu graph $\mathrm{sC}(r/2,s)$. Now, in the former cases, the proof follows from Lemma~\ref{ladder1}, while, in the latter one, we conclude by Lemma~\ref{ex:PX}. In particular, for the rest of the proof we may suppose that $N$ is not an elementary abelian $2$-group.

For any minimal normal subgroup $M$ of $G$, $M_\alpha=M\cap G_\alpha$ is also a $2$-group. Thus, in view of Lemma~\ref{lem:3.2}, $M$ is an elementary abelian $p$-group, for some $p\in\{2,3,5\}$. This is true, in particular, for $N$. Let  $M$ be a minimal normal subgroup distinct from $N$. Since $[N,M]=1$, Lemma~\ref{lem:2.2} gives a contradiction unless $N$ and $M$ are both $p$-groups for the same prime $p$. Thus, 
\begin{equation}\label{eq:socle}\textrm{the socle of }G\textrm{ is an elementary abelian }p\textrm{-group, for some }p\in \{3,5\}.
\end{equation}

Before going any further, we need some extra information on the local action of $G$ on $\Gamma$. Since $G_\alpha$ is a non-identity $2$-group, there exists a unique vertex $\alpha'\in V\Gamma$ adjacent to $\alpha$ that is fixed by the action of $G_\alpha$. It follows that $\{\alpha,\alpha'\}$ is a block of imprimitivity for the action of $G$ on the vertices. Hence,
\[ G_\alpha \leq G_{\{\alpha,\alpha'\}} \quad \textup{and} \quad |G_{\{\alpha,\alpha'\}} : G_\alpha|=2.\]
We obtain that, for any $\beta\in V\Gamma$, neighbour of $\alpha$ distinct from $\alpha'$,
\[|G_{\{\alpha,\alpha'\}} : G_{\alpha\beta}|=4 \quad \textup{and} \quad |G_{\{\alpha,\beta\}} : G_{\alpha\beta}|=2.\]
Let $\{\alpha',\beta,\gamma\}$ be the neighbourhood of $\alpha$.

Assume $G/K$ is cyclic of order $r$. As $\Gamma/N$ is a cycle of length $r$, this means that $G/K$ acts transitively on the vertices and on the edges of $\Gamma/N$. Now, $\beta$ and $\gamma$ are in the same $K$-orbit because $K_\alpha=G_\alpha$ and $G_\alpha$ acts transitively on $\{\beta,\gamma\}$. In particular, each element in $\alpha^N$ has two neighbours in $\beta^N$. As $G/K$ is transitive on edges, we reach a contradiction because each element in $\alpha^N$ would have two neighbours in ${\alpha'}^N$, contradicting the fact that $\alpha$ has valency $3$. Thus
\begin{equation}\label{eq:OOOO1}
	r\hbox{ is even and }G/K\textrm{ is dihedral of order }r.
\end{equation}

Recall that $N$ is an elementary abelian $p$-group with $p\in\{3,5\}$. Thus $N$ is semiregular. We consider  $\mathbf{C}_K(N)$. Since $N\le {\bf C}_K(N)$ and since $K=K_\alpha N$, we deduce ${\bf C}_K(N)=N\times Q$, for some subgroup $Q$ of $K_\alpha$. As $K_\alpha$ is a $2$-group, so is $Q$. Therefore, $Q$ is characteristic in $N\times Q={\bf C}_K(N)$ and hence $Q\unlhd G$. Since $G_\alpha$ is a core-free subgroup of $G$, we get $Q=1$ and ${\bf C}_K(N)=N$.

Since $N$ is a minimal normal subgroup of $G$, $G$ acts irreducibly by conjugation on it, that is, $N$ is an irreducible $\mathbb{F}_pG$-module. As $K\unlhd G$, by Clifford's Theorem, $N$ is a completely reducible $\mathbb{F}_pK$-module. As $K=NG_\alpha$ and $N$ is abelian, $N$ is a completely reducible $\mathbb{F}_pG_\alpha$-module. As $G_\alpha$ is abelian, by Schur's Lemma, $G_\alpha$ induces on each irreducible $\mathbb{F}_pG_\alpha$-submodule a cyclic group action. However, since $G_\alpha$ has exponent $2$, we deduce that each irreducible $\mathbb{F}_pG_\alpha$-submodule has dimension $1$ and  $G_\alpha$ induces on each irreducible $\mathbb{F}_pG_\alpha$-submodule the scalars $\pm 1$. Therefore, $G_\alpha$ acts on $N$ by conjugation as a group of diagonal matrices having eigenvalues in $\{\pm1\}$. In other words, there exists a basis $(n_1,\ldots,n_e)$ of $N$ as a vector space over $\mathbb{F}_p$ such that, 
\begin{equation}\label{eq:OO1}
	\hbox{for each }g\in G_\alpha\hbox{ and for each }n_i,\hbox{ we have }n_i^g\in \{n_i,n_i^{-1}\}.\end{equation} Furthermore, the action of $G$ by conjugation on $N$ preserves the direct product decomposition $N=\langle n_1\rangle\times\cdots\times\langle n_e\rangle$.

We claim that
\begin{align}\label{eq:OOO1}
	{\bf C}_{G_{\{\alpha,\beta\}}}(N)&=1,\\\nonumber
	{\bf C}_{G_{\{\alpha,\alpha'\}}}(N)&=1.\nonumber
\end{align}	
In other words, $G_{\{\alpha,\beta\}}$ and $G_{\{\alpha,\alpha'\}}$ both act faithfully by conjugation on $N$. Let $\gamma\in \{\alpha',\beta\}$ and suppose, arguing by contradiction, that ${\bf C}_{G_{\{\alpha,\gamma\}}}(N)\ne 1$. Since ${\bf C}_K(N)=1$ and $|G_{\{\alpha,\gamma\}}:K\cap G_{\{\alpha,\gamma\}}|=2$, we deduce ${\bf C}_{G_{\{\alpha,\gamma\}}}(N)=\langle x\rangle$, where $x$ is an involution. Since $x\notin K$, $x$ acts semiregularly on $\Gamma/N$ and hence $x$ acts semiregularly on $\Gamma$. From this and from the fact that $x$ centralizes $N$, we deduce that $G$ contains semiregular elements of order $2p\ge 6$, which contradicts Hypothesis~\ref{hyp:3.1}. Thus~\eqref{eq:OOO1} is proven.

Observe that~\eqref{eq:OOO1} implies that an element of $G_{\{\alpha,\alpha'\}}$ or of $G_{\{\alpha,\beta\}}$ is the identity if and only it its action on $N$ by conjugation is trivial.

We show that 
\begin{equation}\label{eq:O3}G_{\{\alpha,\beta\}} \setminus G_{\alpha\beta}\hbox{ contains an involution.}\end{equation}

Let $H$ be the permutation group induced by $G_{\{\alpha,\alpha'\}}$ in its action on the four right cosets  of $G_{\alpha\beta}$ in $G_{\{\alpha,\alpha'\}}$. Since $H$ is a $2$-group, $H$ is isomorphic to either $C_4$, or $C_2\times C_2$, or to the dihedral group of order $8$. In the first two cases, $G_{\alpha\beta}$ is a normal subgroup of both $G_{\{\alpha,\alpha'\}}$ and $G_{\{\alpha,\beta\}}$. As $G_{\alpha\beta}$ is core-free in $G$ and
\[G = \langle G_{\{\alpha,\alpha'\}}, G_{\{\alpha,\beta\}}\rangle, \]
we have that $G_{\alpha\beta}=1$. In particular, $G_{\{\alpha,\beta\}}$ is cyclic of order $2$, hence it contains an involution and~\eqref{eq:O3} follows in this case.

In the latter case, using the notation and the terminology in~\cite{DjokovicAmalgams4:2}, we have that the triple $(G_{\{\alpha,\alpha'\}},G_{\alpha\beta},G_{\{\alpha\,\beta\}})$ is a locally dihedral faithful group amalgam of type $(4,2)$ and $G$ is one of its realizations. Indeed, from the classification in~\cite{DjokovicAmalgams4:2}, we see that  either $G_{\{\alpha,\alpha'\}}\setminus G_\alpha$ or $G_{\{\alpha,\beta\}}\setminus G_{\alpha\beta}$ contains an involution. If $G_{\{\alpha,\beta\}}\setminus G_{\alpha\beta}$ contains an involution, then~\eqref{eq:O3} holds true also in this case. Therefore we   suppose $\tau_1 \in G_{\{\alpha,\alpha'\}}\setminus G_\alpha$ is an involution. We investigate the action by conjugation of $\tau_1$ on $N$.  By~\eqref{eq:O1}, $\tau_1$ is a semiregular automorphism of $\Gamma/K$, because $\tau_1\notin K$. Therefore, $\tau_1$ is a semiregular automorphism of $\Gamma$.
Since no semiregular involution commutes with a non-identity element of $N$, $\tau_1$ acts  by conjugation on $N$ without fixed points, that is, for any $n\in N$, $n^{\tau_1}=n^{-1}$. It follows from~\eqref{eq:OO1} that  $\tau_1$ commutes with $G_\alpha$ and hence $G_{\{\alpha,\alpha'\}}=\langle G_\alpha, \tau_1 \rangle$ is an elementary abelian $2$-group. Now, as $G_{{\alpha\beta}}$ is normal in both $G_{\{\alpha,\alpha'\}}$ and $G_{\{\alpha,\beta\}}$, we can conclude, as before, that $G_{\{\alpha,\beta\}}$ is cyclic of order $2$, hence it contains an involution.
Therefore, in any case,~\eqref{eq:O3} holds true.

Let $e$ be the positive integer such that $N=C_p^e$. We aim to show that 
\begin{equation}\label{eq:O0}e\in \{1,2\}.\end{equation} 	
Let $\tau_2 \in G_{\{\alpha,\beta\}}\setminus G_{\alpha\beta}$ be an involution: the existence of $\tau_2$ is guaranteed by~\eqref{eq:O3}. Now, we look at the action by conjugation of $\tau_2$ on $N$. Observe $\tau_2\notin K$ and hence $\tau_2$ is a semiregular automorphism of $\Gamma$. Therefore, arguing as in the previous paragraph (with the involution $\tau_1$ replaced by $\tau_2$), we deduce that $n^{\tau_2}=n^{-1}$ for every $n\in N$. Let $L:=\langle \tau_2^g\mid g\in G\rangle$. Since $G/K$ is a dihedral group and $\tau_2$ is an involution, we deduce that $|G/K:LK/K|\le 2$, that is, $|G:LK|\le 2$. Observe now that, for any $n \in N$, $n^{\tau_2^g}=n^{-1}$. Therefore, the group induced by the action by conjugation of $L$ on $N$ has order $2$. This and~\eqref{eq:OO1} shows that the subgroup $LK$ of $G$ preserves the direct sum decomposition $N=\langle n_1\rangle\times\cdots\times \langle n_e\rangle$. However, since $G$ acts irreducibly on $N$ and since $|G:LK|\le 2$, we finally obtain $e\le 2$, as claimed in~\eqref{eq:O0}.  Observe that from this it follows that $|N|=p^e\in \{3,9,5,25\}$.

We are now ready to conclude this case. Observe that $G_\alpha$ contains an element $x$ with $n^x=n^{-1}$ for every $n\in N$. This is immediate from~\eqref{eq:OO1} when $e=1$, or when $e=2$ and $|G_\alpha|=4$. When $e=2$ and $|G_\alpha|<4$, we have $|G_\alpha|=2$ and hence the non-identity element of $G_\alpha$ acts by conjugation on $N$ inverting each of its elements. 

Now, $x$ and $\tau_2$ both induce the same action by conjugation on $N$, contradicting~\eqref{eq:OOO1}. This final contradiction has concluded the analysis of this case.

\subsection{$G_\alpha\ne 1$ and $\Gamma/N$ is a cubic graph.}

Under this assumption, any two distinct neighbours of $\alpha$ are in distinct $N$-orbits, thus  $N_\alpha =1$. In particular, Lemma~\ref{lem:3.2} gives that $N$ is elementary abelian. Set $\bar{\Gamma}:=\Gamma/N$, $\bar G:=G/N$ and $\bar\alpha:=\alpha^N$. Since $|V\bar\Gamma|<|V\Gamma|$, by Hypothesis~\ref{hyp:3.1} the pair $(\bar \Gamma,\bar G)$ is not a counterexample to Theorem~\ref{thm.main:1.2} and hence $(\bar \Gamma,\bar G)$ is one of the pairs appearing in Table~\ref{table:table}. Moreover, since $G_\alpha\ne 1$, we have the additional information that a vertex-stabilizer $\bar{G}_{\bar\alpha}\cong G_\alpha$ is not the identity.

We have resolved this case with a computer computation. Since this computer computation is quite involved, we give some details. Let $(\bar \Gamma,\bar G)$ be any pair in Table~\ref{table:table}, except for the last row. For each prime $p\in \{2,3,5\}$, we have constructed all the irreducible modules  of $\bar G$ over the field $\mathbb{F}_p$ having $p$ elements. Let $V$ be one of these irreducible modules. This module $V$ corresponds to the putative minimal normal subgroup $N$ of $G$.  
We have constructed all the distinct extensions of $\bar G$ via $V$. Let $E$ be one of these extensions and let $\pi:E\to \bar G$ be the natural projection with $\mathrm{Ker}(\pi)=V$. This extension $E$ corresponds to the putative abstract group $G$.  For each such extension, we have computed all the subgroups $H$ of $E$ with the property that $\pi_{|H}$ is an isomorphism between $H$ and $\bar{G}_{\bar\alpha}$. This subgroup $H$ is our putative vertex-stabilizer $G_\alpha$. This computation can be performed in $\pi^{-1}(\bar{G}_{\bar{\alpha}})$. Next, we have constructed the permutation representation $E_p$ of $E$ acting on the right cosets of $H$ in $E$. This permutation group $E_p$ is our putative permutation group $G$. If $E_p$ has semiregular elements of order at least $6$, then we have discarded $E$ from further consideration.

For each permutation group $E_p$ as above, we have verified, by considering the orbital graphs of $E_p$, whether $E_p$ acts on a connected cubic graph. This is our putative graph $\Gamma$. This step is by far the most expensive step in the computation.	

This whole process had to be applied repeatedly starting with the pairs arising from the census of connected cubic graphs having at most $1\,280$ vertices.

For instance, the graphs having $65\,610$ vertices were found by applying this procedure starting with the graph having $810$ vertices and its transitive group of automorphisms having $1\,620$ elements: here the elementary abelian cover $N$ has cardinality $81=3^4$. Incidentally, we have found only one pair up to isomorphism. Next, by applying this procedure to this pair, we found no new examples.

We give some further details of the computation when we applied the procedure with $\bar \Gamma$ having $1\,250=2\cdot 5^4$ vertices and with its corresponding vertex-transitive subgroup $\bar G$ having order $2\,500=2^2\cdot 5^4$. When we applied this procedure, we have obtained graphs having $2\cdot 5^5=6\,250$ vertices and admitting a group of automorphisms having $2^2\cdot 5^5=12\,500$ elements. Actually, in this step, we have found only one pair up to isomorphism. We have repeated this procedure two more times, obtaining graphs having $2\cdot 5^6=31\,250$ and $2\cdot 5^7=156\,250$ vertices. We were not able to push this computation further. Therefore to complete the proof of Theorem~\ref{thm.main:1.2}, we need to show that any new pair $(\Gamma,G)$ has the property that $|V\Gamma|=2\cdot 5^\ell$ and $|G|=4\cdot 5^\ell$, with $\ell\le 34$.

From the discussion above we may suppose that $|V\bar\Gamma|=2\cdot 5^\ell$ and $|\bar G|=4\cdot 5^\ell$ with $\ell\le 34$. Moreover, $\bar \Gamma$ is a regular cover of the graph, say $\Delta$, having $1\,250$ vertices and $\bar{G}$ is a quotient of the group of automorphisms of $\Delta$, say $H$, with $|H|=2\,500$. In particular, a Sylow $2$-subgroup of $\bar{G}$ is cyclic and $\bar{G}$ has a normal Sylow $5$-subgroup. (This information can be extracted from the analogous properties of $H$.) Let $\bar{P}$ be a Sylow $5$-subgroup of $\bar G$ and observe that every non-identity element of $\bar P$ has order $5$ because every semiregular element of  $\bar G$ has order at most $6$. Let $P$ be the subgroup of $G$ with $G/N=\bar P$. Assume $N$ is not an elementary abelian $5$-group. Then $N$ is an elementary abelian $p$-group for some $p\in \{2,3\}$. Let $Q$ be a Sylow $5$-subgroup of $P$ and observe that $P=N\rtimes Q$. The elements in $P$ are semiregular and hence each element of $P$ has order at most $6$. This implies that the elements of $P$ have order $1$, $5$ or $p$. This implies that the action, by conjugation, of $Q$ on $N$ is fixed-point-free and $P$ is a Frobenius group with Frobenius kernel $N$ and Frobenius complement $Q$. The structure theorem of Frobenius complements gives that $Q$ is cyclic and hence $|Q|=5$, which is a contradiction. This contradiction has shown that $N$ is an elementary abelian $5$-group and hence $P$ is a Sylow $5$-subgroup of $G$. Moreover, $G=P\rtimes \langle x\rangle$, where $\langle x\rangle$ is a cyclic group of order $4$. We have shown that $|V\Gamma|=2\cdot 5^{\ell'}$ and $|G|=2^2\cdot 5^{\ell'}$. Therefore, it remains to show that $\ell'\le 34$.

Since $|G_\alpha|=2$, $G_\alpha$ fixes a unique neighbour of $\alpha$. Let us call $\alpha'$ this neighbour. Now, $G_{\{\alpha,\alpha'\}}$ has order $4$ because $\{\alpha,\alpha'\}$ is a block of imprimitivity for the action of $G$ on $V\Gamma$. Therefore, by Sylow's theorem, we may suppose that
$$G_{\{\alpha,\alpha'\}}=\langle x\rangle.$$
In particular, $G_\alpha=\langle x^2\rangle$.

Let $\beta$ and $\gamma$ be the neighbours of $\alpha$ with $\beta\ne \alpha'\ne \gamma$. Clearly, $|G_{\{\alpha,\beta\}}|=2$ and hence, by Sylow's theorem, $$G_{\{\alpha,\beta\}}=\langle (x^2)^y\rangle,$$ for some $y\in P$. 

Since $\Gamma$ is connected, we have
$$G=\langle G_{\{\alpha,\alpha'\}},G_{\{\alpha,\beta\}}\rangle=\langle x,(x^2)^y\rangle=\langle x,y^{-1}y^{x^2}\rangle.$$
As $P\unlhd G$ and $o(x)=4$, we deduce
$$P=\langle y^{-1}y^{x^2},(y^{-1}y^{x^2})^x,(y^{-1}y^{x^2})^{x^2},(y^{-1}y^{x^2})^{x^3}\rangle.$$
Now,
$$(y^{-1}y^{x^2})^{x^2}=(y^{x^2})^{-1}y^{x^4}=(y^{x^2})^{-1}y=(y^{-1}y^{x^2})^{-1}.$$
Therefore, $P=\langle y^{-1}y^{x^2},(y^{-1}y^{x^2})^x\rangle$ is a $2$-generated group of exponent $5$. In view of the restricted  Burnside problem (see \cite{HWW1974} and \cite{Zelmanov}), the order of $P$ is at most $5^{34}$ and hence $\ell'\le 34$.

\bibliographystyle{alpha}
\bibliography{semiregularBib}
\end{document}